\numberwithin{equation}{section}
\newtheorem{theo}{Theorem}[section]
\newtheorem{cor}[theo]{Corollary}
\newtheorem{prop}[theo]{Proposition}
\newtheorem{lemma}[theo]{Lemma}
\newtheorem{remark}[theo]{Remark}
\newtheorem{remarks}[theo]{Remarks}
\newenvironment{proof}[1][Proof]{\textbf{#1.} }{\ \rule{0.5em}{0.5em}}
\begin{document}

\title{Maximums on Trees}

\author{Predrag R. Jelenkovi\'c \\ {\small Department of Electrical Engineering} \\ {\small Columbia University} 
\and
Mariana Olvera-Cravioto \\ {\small Department of Industrial Engineering and Operations Research} \\ {\small Columbia University}
}

\maketitle

\begin{abstract}
We study the minimal/endogenous solution $R$ to the maximum recursion on weighted branching trees given by
$$R\stackrel{\mathcal{D}}{=}\left(\bigvee_{i=1}^NC_iR_i \right)\vee Q,$$
where $(Q,N,C_1,C_2,\dots)$ is a random vector with $N\in \mathbb{N}\cup\{\infty\}$, $P(|Q|>0)>0$ and nonnegative weights $\{C_i\}$, and $\{R_i\}_{i\in\mathbb{N}}$ is a sequence of i.i.d. copies of $R$ independent of $(Q,N,C_1,C_2,\dots)$; $\stackrel{\mathcal{D}}{=}$ denotes equality in distribution. Furthermore, when $Q>0$ this recursion can be transformed into its additive equivalent, which corresponds to the maximum of a branching random walk and is also known as a high-order Lindley equation. We show that, under natural conditions, the asymptotic behavior of $R$ is power-law, i.e., $P(|R|>x)\sim Hx^{-\alpha}$, for some $\alpha>0$ and $H>0$. This has direct implications for the tail behavior of other well known branching recursions.

\vspace{5mm}

\noindent {\em Keywords:} High-order Lindley equation, stochastic fixed-point equations, weighted branching processes, branching random walk, power law distributions, large deviations, Cram\'er-Lundberg approximation, random difference equations, maximum recursion

\noindent {\em 2000 MSC:} 60H25, 60J80, 60F10, 60K05

\end{abstract}

\section{Introduction}

In the recent years considerable attention \cite{Jel_Olv_10, Jel_Olv_12a, Jel_Olv_12b, Alsm_Mein_10a, Alsm_Mein_10b, Mirek_12, Alsm_Dam_Ment_13, Bur_Dam_Ment_Mir_13} has been given to the characterization and analysis of the solutions to the non homogeneous linear equation 
\begin{equation} \label{eq:IntroLinear}
R_{L} \stackrel{\mathcal{D}}{=} \sum_{i=1}^N C_i R_{L,i} + Q,
\end{equation}
where $(Q, N, C_1, C_2, \dots)$ is a real-valued random vector with $N \in \mathbb{N} \cup \{\infty\}$, $P\left(|Q| >0 \right)>0$, and $\{R_{L,i}\}_{i\in \mathbb{N}}$ is a sequence of i.i.d. random variables independent of $(Q, N, C_1, C_2, \dots)$ having the same distribution as $R_L$. Equation \eqref{eq:IntroLinear} has applications in a wide variety of fields, including the analysis of divide and conquer algorithms \cite{Ros_Rus_01, Nei_Rus_04}, e.g. Quicksort \cite{Fill_Jan_01}; the analysis of the PageRank algorithm \cite{Volk_Litv_08, Jel_Olv_10}; and kinetic gas theory \cite{Bur_Dam_Ment_Mir_13}. Our work in \cite{Jel_Olv_12a, Jel_Olv_12b} shows that the so-called endogenous solution, as termed in \cite{Aldo_Band_05}, of \eqref{eq:IntroLinear}, under the natural main root condition $E\left[ \sum_{i=1}^N |C_i|^\alpha \right] = 1$ with positive derivative $0 < E\left[ \sum_{i=1}^N |C_i|^\alpha \log |C_i| \right] < \infty$ for some $\alpha > 0$, has the power tail behavior, 
$$P(|R_L| > t) \sim H_L t^{-\alpha}, \qquad t \to \infty,$$
where $0 \leq H_L < \infty$. The main tool used in deriving this result was a generalization of Goldie's Implicit Renewal Theorem \cite{Goldie_91} to weighted branching trees. 

Motivated by a different set of applications, we study in this paper the maximum recursion on trees given by
\begin{equation} \label{eq:IntroMaximum}
R \stackrel{\mathcal{D}}{=} \left(\bigvee_{i=1}^N C_i R_i \right) \vee Q,
\end{equation}
where $(Q, N, C_1, C_2, \dots)$ is a random vector with $N \in \mathbb{N} \cup \{\infty\}$, nonnegative weights $\{C_i\}$, and $P\left(|Q| >0 \right)>0$, and $\{R_i\}_{i\in \mathbb{N}}$ is a sequence of i.i.d. random variables independent of $(Q, N, C_1, C_2, \dots)$ having the same distribution as $R$. Here and throughout the paper we use $x \vee y$ and $x \wedge y$ to denote the maximum and the minimum, respectively, of $x$ and $y$. We point out that by taking the logarithm in \eqref{eq:IntroMaximum} when $Q > 0$ a.s., we obtain the additive equivalent
\begin{equation} \label{eq:AdditiveMax}
X \stackrel{\mathcal{D}}{=} \bigvee_{i=1}^N \left( Y_i + X_i \right) \vee V,
\end{equation}
where $X = \log R$, $Y_i = \log C_i$, $V = \log Q$, and the $\{X_i\}_{i \in \mathbb{N}}$  are i.i.d. copies of $X$, independent of $(V, N, Y_1, Y_2, \dots)$. Note that for $N \equiv 1$ and $V \equiv 0$, \eqref{eq:AdditiveMax} reduces to the classical Lindley's equation, satisfied by the reflected random walk; and when $V \not\equiv 0$, the recursion corresponds to a random walk reflected on a random barrier. In general, the preceding additive equation has been studied in the literature of branching random walks since, when $V \equiv 0$, $X$ represents the range of the branching random walk (see \cite{Aldo_Band_05}, \S 4.2). Recursion \eqref{eq:AdditiveMax} was termed ``high-order Lindley equation" and studied in the context of queues with synchronization in \cite{Kar_Kel_Suh_94}. Unlike the classical Lindley equation, it was shown in \cite{Kar_Kel_Suh_94} that \eqref{eq:AdditiveMax} can have multiple solutions. A more complete analysis of the existence and the characterization of the entire family of solutions was carried out in \cite{Biggins_98} (e.g., see Theorem 1 in \cite{Biggins_98}). In addition, it can be shown that the study of \eqref{eq:AdditiveMax} arises in the context of today's massively parallel computing. More specifically, consider a job that is split into smaller pieces which are sent randomly to different processors, and these pieces need to communicate, i.e., need to be synchronized, in order to complete their processing.  In the limiting regime as the number of processors goes to infinity, a similar reasoning as in \cite{Kar_Kel_Suh_94} can be used to show that \eqref{eq:AdditiveMax} represents the delay for job completion in this massively parallel system. In addition to these applications, a better understanding of \eqref{eq:IntroMaximum} immediately leads to important insights to other max-plus branching recursions. More precisely, for the case of nonnegative weights, \eqref{eq:IntroMaximum} is a natural lower bound for many other recursions on trees \cite{Aldo_Band_05}, e.g., for the same set of weights, elementary arguments show that $R_L$ in \eqref{eq:IntroLinear} is stochastically larger than the solution $R$ in \eqref{eq:IntroMaximum}.

For all of the reasons described above, we study in this paper the tail behavior of the minimal/endogenous solution to the maximum recursion in \eqref{eq:IntroMaximum} (or \eqref{eq:AdditiveMax}). As shown in \cite{Biggins_98}, equation \eqref{eq:AdditiveMax} can have multiple solutions. It is worth noting that the minimal/endogenous solution we study here is also central in characterizing all other solutions, as stated in Theorem 1 of \cite{Biggins_98} (there $M$ is used to denote the minimal/endogenous solution). Furthermore, we would like to point out that under iterations of the fixed-point equation \eqref{eq:IntroMaximum} (or \eqref{eq:AdditiveMax}), the minimal/endogenous solution is the primary limiting value, unless one starts with very specific initial distributions (see Theorem 1(ii) in \cite{Biggins_98}); we will discuss this in more detail in Section \ref{S.MaxRec}.  In addition, we emphasize that the tail characterization of the other (non minimal solutions) was given in \cite{Biggins_98}, but the tail behavior of the minimal one was left open. 

Our first main result, stated in Theorem \ref{T.MaximumRecursion}, describes the tail behavior of the minimal/endogenous solution to the maximum recursion \eqref{eq:IntroMaximum} (or \eqref{eq:AdditiveMax}). In this regard, the application of the Implicit Renewal Theorem on Trees (see Theorem 3.4 \cite{Jel_Olv_12b}), under the natural conditions $E\left[ \sum_{i=1}^N C_i^\alpha \right] = 1$ and $0 < E\left[ \sum_{i=1}^N C_i^\alpha \log C_i \right] < \infty$ for some $\alpha > 0$, readily gives that
\begin{equation} \label{eq:MaxAsymptotics}
P(R > t) \sim H t^{-\alpha}, \qquad t \to \infty,
\end{equation}
where $0 \leq H < \infty$. However, the main difficulty in establishing the power-law behavior lies in proving that $H > 0$. 
Unlike in the linear case, it is not clear that this constant should be positive at all, since at first glance the expression which determines $H$ in Theorem 3.4 of Section \ref{S.MaxRec}, 
$$E\left[  (Q^+)^\alpha  \vee  \bigvee_{i=1}^N (C_i R_i^+)^\alpha   - \sum_{i=1}^N (C_i R_i^+ )^\alpha \right],$$
appears just as likely to be negative. Note also that a direct application of a ladder heights argument gives the positivity of the constant for the classical non-branching case ($N \equiv 1$), see Theorem 5.2 in \cite{Goldie_91}.  However, for the branching case no ladder heights equivalent is available. Hence, our first main contribution lies in a new sample-path construction showing that $H > 0$ under no additional assumptions (besides those needed for the application of Theorem~3.4 in \cite{Jel_Olv_12b}). Observe that in the additive case of equation \eqref{eq:AdditiveMax}, our result yields the exponential asymptotics $P(X > y) \sim H e^{-\alpha y}$, which is the generalization of the well known Cram\'er-Lundberg approximation. The latter is widely used in insurance risk theory and queueing.

Furthermore, as an immediate corollary one obtains the strict positivity of $H_L$ in the linear case with nonnegative $(Q, N, C_1, C_2, \dots)$.  In this setting, the work in \cite{Jel_Olv_12a} used a straightforward convexity argument to show that $H_L >0$ for $\alpha \geq 1$, but the corresponding question for $\alpha \in (0,1)$ was left open. The strict positivity of $H_L$ for $\alpha \in (0,1)$ was recently resolved in \cite{Alsm_Dam_Ment_13} as part of the more general real-valued case, but under additional assumptions that include $E\left[ \sum_{i=1}^N C_i^{\alpha+\epsilon} \right] < \infty$. Note that in the additive equation \eqref{eq:AdditiveMax}, this extra moment assumption corresponds to the finiteness of $\alpha+\epsilon$ exponential moments of the $\{Y_i\}$.  Since the new results on the maximum hold without such additional assumptions, Theorem \ref{T.MaximumRecursion} fully completes the prior work for nonnegative $(Q, N, C_1, C_2, \dots)$.  In addition, as already mentioned, the maximum is a natural lower bound for other max-plus recursions, and therefore this result can potentially be used to prove the power-tail asymptotics of the endogenous solutions to other recursions, e.g., the discounted tree sums considered in \cite{Aldo_Band_05}. 

We now go back to the linear recursion \eqref{eq:IntroLinear} with real-valued weights $(Q, C_1, C_2, \dots)$, which has recently been considered in  \cite{Alsm_Mein_10b, Jel_Olv_12b, Alsm_Dam_Ment_13} (see also \cite{Bur_Dam_Ment_Mir_13} for the multivariate case). The characterization of all the solutions of \eqref{eq:IntroLinear} when $Q$ is real-valued, $\{C_i \} \geq 0$ and $N < \infty$ a.s. was given in \cite{Alsm_Mein_10b}. The work in \cite{Jel_Olv_12b} establishes the Implicit Renewal Theorem on Trees for the real-valued case and shows that, under the usual conditions $E\left[ \sum_{i=1}^N |C_i|^\alpha \right] = 1$ and $0 < E\left[ \sum_{i=1}^N |C_i|^\alpha \log|C_i| \right] < \infty$, the endogenous solution to \eqref{eq:IntroLinear} has a power tail behavior of the form $H_L t^{-\alpha}$, $H_L \geq 0$. In that paper the strict positivity of $H_L$ in its full generality remained open.  It was this open problem that motivated the work in \cite{Alsm_Dam_Ment_13}, where it was shown, using complex analysis and analytical functions,  that $H_L > 0$ under the additional assumptions $N < \infty$ a.s., $E\left[ \sum_{i=1}^N |C_i|^{\alpha+\epsilon} \right] < \infty$ and $E\left[ \left( \sum_{i=1}^N |C_i| \right)^{\alpha+\epsilon} \right] < \infty$.

In this paper, we revisit the problem of the strict positivity of $H_L$ for the general real-valued case using our result on the maximum equation \eqref{eq:IntroMaximum} (with nonnegative weights $\{C_i\}$), under no additional assumptions on the vector $(N, C_1, C_2, \dots)$ besides those needed for Theorem 3.4 in \cite{Jel_Olv_12b}. However, we do require that $Q$ does not reduce to a constant given $(N, C_1, C_2, \dots)$. Our main set of arguments is based on L\'evy's symmetrization approach. We would like to mention that although the proof of Theorem 4.1 in \cite{Goldie_91} (the part that establishes the positivity of $H_L$ for the case $N \equiv 1$) also relies on symmetrization, our proof is completely different.  While in the one-dimensional case it was enough to center the weights around their median, in the branching case we need complete symmetry. More precisely, the proof consists in first showing the positivity of the constant for symmetric trees, and then extending it to the general case through a coupling argument; see Corollary~\ref{C.StrictPositivity}. In general, as previously stated, we expect that Theorem~\ref{T.MaximumRecursion} for the maximum, coupled with the Implicit Renewal Theorem on Trees (Theorem~3.4 in \cite{Jel_Olv_12b}), can be used to derive the exact power law asymptotics of the solutions to other branching recursions \cite{Aldo_Band_05}.

The paper is organized as follows. Section \ref{S.ModelDescription} includes a brief description of the weighted branching process. Section~\ref{S.MaxRec} contains our first main result about the asymptotic behavior of the minimal/endogenous solution to the maximum recursion \eqref{eq:IntroMaximum}, including the strict positivity of $H$. Section~\ref{S.LinearRecursion} presents our proof of the positivity of the constant $H_L$ for the general mixed-sign linear recursion \eqref{eq:IntroLinear}.

\section{Model description} \label{S.ModelDescription}

We use the model from \cite{Jel_Olv_12b} for defining a weighted branching tree.  First we construct a random tree $\mathcal{T}$. We use the notation $\emptyset$ to denote the root node of $\mathcal{T}$, and $A_n$, $n \geq 0$, to denote the set of all individuals in the $n$th generation of $\mathcal{T}$, $A_0 = \{\emptyset\}$. Let $Z_n$ be the number of individuals in the $n$th generation, that is, $Z_n = |A_n|$, where $| \cdot |$ denotes the cardinality of a set; in particular, $Z_0 = 1$. 

Next, let $\mathbb{N}_+ = \{1, 2, 3, \dots\}$ be the set of positive integers and let $U = \bigcup_{k=0}^\infty (\mathbb{N}_+)^k$ be the set of all finite sequences ${\bf i} = (i_1, i_2, \dots, i_n) \in U$, where by convention $\mathbb{N}_+^0 = \{ \emptyset\}$ contains the null sequence $\emptyset$. To ease the exposition, for a sequence ${\bf i} = (i_1, i_2, \dots, i_k) \in U$ we write ${\bf i}|n = (i_1, i_2, \dots, i_n)$, provided $k \geq n$, and  ${\bf i}|0 = \emptyset$ to denote the index truncation at level $n$, $n \geq 0$. Also, for ${\bf i} \in A_1$ we simply use the notation ${\bf i} = i_1$, that is, without the parenthesis. Similarly, for ${\bf i} = (i_1, \dots, i_n)$ we will use $({\bf i}, j) = (i_1,\dots, i_n, j)$ to denote the index concatenation operation, if ${\bf i} = \emptyset$, then $({\bf i}, j) = j$. 

We iteratively construct the tree as follows. Let $N$ be the number of individuals born to the root node $\emptyset$, $N_\emptyset = N$, and let $\{N_{\bf i} \}_{{\bf i} \in U, {\bf i} \neq \emptyset}$ be i.i.d. copies of $N$. Define now 
\begin{equation} \label{eq:AnDef}
A_1 = \{ i \in \mathbb{N}: 1 \leq i \leq N \}, \quad A_n = \{ ({\bf i}, i_n) \in U:  {\bf i} \in A_{n-1}, 1 \leq i_n \leq N_{\bf i} \}.
\end{equation}
It follows that the number of individuals $Z_n = |A_n|$ in the $n$th generation, $n \geq 1$, satisfies the branching recursion 
$$Z_{n} = \sum_{{\bf i } \in A_{n-1}} N_{\bf i}.$$ 

\begin{center}
\begin{figure}[h]
\begin{picture}(430,160)(0,0)
\put(0,0){\includegraphics[scale = 0.8, bb = 0 510 500 700, clip]{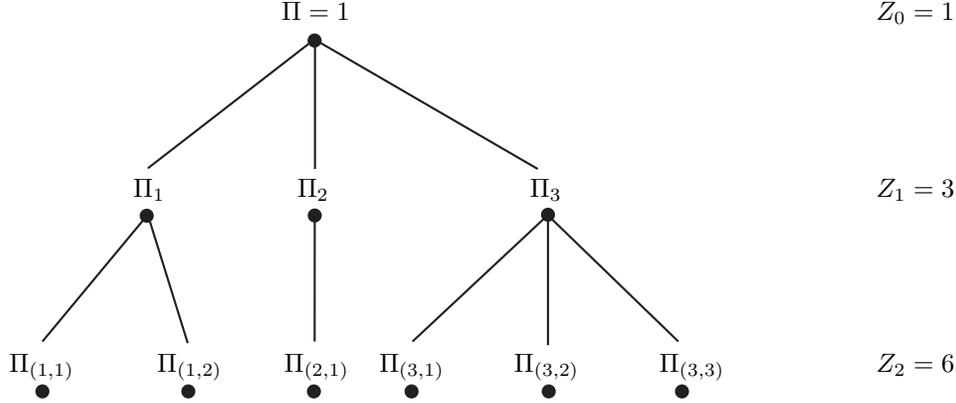}}
\put(125,150){\small $\Pi = 1$}
\put(69,83){\small $\Pi_{1}$}
\put(131,83){\small $\Pi_{2}$}
\put(219,83){\small $\Pi_{3}$}
\put(22,17){\small $\Pi_{(1,1)}$}
\put(78,17){\small $\Pi_{(1,2)}$}
\put(126,17){\small $\Pi_{(2,1)}$}
\put(162,17){\small $\Pi_{(3,1)}$}
\put(213,17){\small $\Pi_{(3,2)}$}
\put(268,17){\small $\Pi_{(3,3)}$}
\put(350,150){\small $Z_0 = 1$}
\put(350,83){\small $Z_1 = 3$}
\put(350,17){\small $Z_2 = 6$}
\end{picture}
\caption{Weighted branching tree}\label{F.Tree}
\end{figure}
\end{center}

Now, we construct the weighted branching tree $\mathcal{T}_{C}$ as follows. Let \linebreak $\{ (N_{\bf i}, C_{({\bf i}, 1)},  C_{({\bf i}, 2)}, \dots) \}_{{\bf i} \in U, {\bf i} \neq \emptyset}$ be a sequence of i.i.d. copies of $(N, C_1, C_2, \dots)$. $N_\emptyset$ determines the number of nodes in the first generation of $\mathcal{T}$ according to \eqref{eq:AnDef}, and each node in the first generation is then assigned its corresponding vector $(N_i, C_{(i,1)}, C_{(i,2)}, \dots)$ from the i.i.d. sequence defined above. In general, for $n \geq 2$, to each node ${\bf i} \in A_{n-1}$ we assign its corresponding $(N_{\bf i}, C_{({\bf i}, 1)}, C_{({\bf i}, 2)}, \dots )$ from the sequence and construct
$A_{n} = \{({\bf i}, i_{n}) \in U: {\bf i} \in A_{n-1}, 1 \leq i_{n} \leq N_{\bf i}\}$. 
For each node in $\mathcal{T}_{C}$ we also define the weight $\Pi_{(i_1,\dots,i_n)}$ via the recursion
$$ \Pi_{i_1} =C_{i_1}, \qquad \Pi_{(i_1,\dots,i_n)} = C_{(i_1,\dots, i_n)} \Pi_{(i_1,\dots,i_{n-1})}, \quad n \geq 2,$$
where $\Pi =1$ is the weight of the root node. Note that the weight $\Pi_{(i_1,\dots, i_n)}$ is equal to the product of all the weights $C_{(\cdot)}$ along the branch leading to node $(i_1, \dots, i_n)$, as depicted in Figure \ref{F.Tree}.

\section{The maximum recursion: $R = \left( \bigvee_{i=1}^N C_i R_i \right) \vee Q$} \label{S.MaxRec}

In this section, we study the maximum fixed-point equation given by
\begin{equation} \label{eq:Maximum}
R \stackrel{\mathcal{D}}{=} \left(\bigvee_{i=1}^N C_i R_i \right) \vee Q,
\end{equation}
where $(Q, N, C_1, C_2, \dots)$ is a random vector with $N \in \mathbb{N} \cup \{\infty\}$, $\{C_i\} \geq 0$ and $P(|Q| > 0) > 0$, and $\{R_i\}_{i\in \mathbb{N}}$ is a sequence of i.i.d. random variables independent of $(Q, N, C_1, C_2, \dots)$ having the same distribution as $R$.  As already mentioned, the additive version of \eqref{eq:Maximum}, given in \eqref{eq:AdditiveMax}, was termed ``high-order Lindley equation" and studied in the context of queues with synchronization in \cite{Kar_Kel_Suh_94}. The full characterization of its multiple solutions was given in \cite{Biggins_98}.  More recently, a related recursion where $Q \equiv 0$, $N = \infty$, and the $\{C_i\}$ are real valued deterministic constants, has been analyzed in \cite{Alsm_Rosl_08}.  The more closely related case of $Q \equiv 0$ and $\{C_i \} \geq 0$ being random was studied earlier in \cite{Jag_Ros_04}. For this and other max-plus equations appearing in a variety of applications see the survey by \cite{Aldo_Band_05}.

Using standard arguments, we start by constructing an endogenous solution to \eqref{eq:Maximum} on a tree and then we show that this solution is finite a.s. and unique under iterations provided that the initial values and the weights satisfy appropriate moment conditions. 

Following the notation of Section \ref{S.ModelDescription}, define the process
\begin{equation} \label{eq:V_k}
V_n = \bigvee_{{\bf i} \in A_n} Q_{{\bf i}} \Pi_{{\bf i}}, \qquad n \geq 0,
\end{equation}
on the weighted branching tree $\mathcal{T}_{Q, C}$. Recall that the convention is that  $(Q, N, C_1, C_2, \dots) = (Q_\emptyset, N_\emptyset, C_{(\emptyset, 1)}, C_{(\emptyset, 2)}, \dots)$ denotes the random vector corresponding to the root node. Next, define the process $\{R^{(n)}\}_{n \geq 0}$ according to
$$R^{(n)} = \bigvee _{k=0}^n V_k, \qquad n \geq 0.$$
It is not hard to see that $R^{(n)}$ satisfies the recursion
\begin{equation} \label{eq:MaxRecSamplePath}
R^{(n)} = \left( \bigvee_{j=1}^{N_\emptyset} C_{(\emptyset, j)} R_j^{(n-1)} \right) \vee Q_\emptyset = \left( \bigvee_{j=1}^{N} C_{j} R_j^{(n-1)} \right) \vee Q ,
\end{equation}
where $\{R_j^{(n-1)} \}$ are independent copies of $R^{(n-1)}$ corresponding to the tree starting with individual $j$ in the first generation and ending on the $n$th generation. One can also verify that
$$V_n = \bigvee_{k=1}^{N_{\emptyset}} C_{(\emptyset,k)} \bigvee_{(k,\dots, i_n) \in A_n} 
Q_{(k,\dots, i_n)} \prod_{j=2}^n C_{(k,\dots,i_j)}  \stackrel{\mathcal{D}}{=} \bigvee_{k=1}^N C_k V_{(n-1),k},$$
where $\{V_{(n-1),k}\}$ is a sequence of i.i.d. random variables independent of $(N, C_1, C_2, \dots)$ and having the same distribution as $V_{n-1}$. 

We now define the random variable $R$ according to
\begin{equation}
\label{eq:maxR}
R \triangleq \lim_{n\to \infty} R^{(n)} = \bigvee_{k=0}^\infty V_k.
\end{equation}

Note that $R^{(n)}$ is monotone increasing sample-pathwise, so $R$ is well defined. Also, by monotonicity of  $R^{(n)}$ and \eqref{eq:MaxRecSamplePath}, we obtain that $R$ solves
$$R = \left( \bigvee_{j=1}^{N_{\emptyset}} C_{(\emptyset,j)} R_j^{(\infty)} \right) \vee Q_\emptyset = \left( \bigvee_{j=1}^{N} C_{j} R_j^{(\infty)} \right) \vee Q,$$
where $\{R_j^{(\infty)} \}_{j \in \mathbb{N}}$ are i.i.d. copies of $R$, independent of $(Q, N, C_1, C_2, \dots)$, see also Section 2 in \cite{Biggins_98}. 
Clearly this implies that $R$, as defined by  \eqref{eq:maxR}, is a solution in distribution to \eqref{eq:Maximum}. However, this solution might be $\infty$. 
Next, we establish in the following lemma the finiteness of the moments of $R$, and in particular that $R < \infty$ a.s.; its proof uses standard contraction arguments but is included for completeness; e.g. see Theorem 6 (i) in \cite{Biggins_98}. Conditions under which $R$ is infinite a.s. can be found in Corollary 4 in \cite{Biggins_98}.

\begin{lemma} \label{L.Moments_R_Max}
Assume that $\rho_\beta = E\left[ \sum_{i=1}^N C_i^\beta \right]<1$ and 
$E[|Q|^\beta] < \infty$ for some $\beta>0$. Then, $E[|R|^\gamma] < \infty$ for all $0  < \gamma \leq \beta$, and in particular, $|R| < \infty$ a.s. 
Moreover, if $\beta  \geq 1$, $R^{(n)} \stackrel{L_\beta}{\to} R$, where $L_\beta$ stands for convergence in $(E|\cdot|^\beta)^{1/\beta}$ norm. 
\end{lemma}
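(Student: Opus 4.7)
The plan is to bound $|R|$ pathwise by a convergent series of terms whose $L^\beta$ norms we control via the standard many-to-one formula on the weighted branching tree, and then convert these uniform bounds into a.s.\ finiteness and $L^\beta$ convergence.

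First I would establish a sample-path bound of the form $|R^{(n)}| \le \sum_{k=0}^{n} |V_k|$ (and letting $n\to\infty$, $|R|\le \sum_{k=0}^\infty |V_k|$). This requires a brief case split: if $R^{(n)}\ge 0$, then $|R^{(n)}| \le \bigvee_{k\le n} V_k^+ \le \sum_{k\le n} |V_k|$; if $R^{(n)}<0$, then all $V_k$ are negative, so $V_0 \le R^{(n)} < 0$ yields $|R^{(n)}| \le |V_0|$. Either way the stated bound holds. This is the only step where the possible negativity of $Q$ forces any real care; once past it, everything works with nonnegative quantities because $\{C_i\}\ge 0$ implies $\Pi_{\mathbf{i}}\ge 0$.

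Next I would estimate $E[|V_k|^\beta]$. Since $|V_k| \le \bigvee_{\mathbf{i}\in A_k} |Q_{\mathbf{i}}|\Pi_{\mathbf{i}}$, taking $\beta$-powers commutes with the maximum on nonnegative numbers and then dominating the maximum by the sum gives $|V_k|^\beta \le \sum_{\mathbf{i}\in A_k} |Q_{\mathbf{i}}|^\beta \Pi_{\mathbf{i}}^\beta$. The standard many-to-one identity on the weighted branching tree (obtained by conditioning level by level and using independence of the $\{Q_{\mathbf{i}}\}$ from the weights on the path to $\mathbf{i}$) yields
\begin{equation*}
E\!\left[\sum_{\mathbf{i}\in A_k} |Q_{\mathbf{i}}|^\beta \Pi_{\mathbf{i}}^\beta\right] \;=\; E[|Q|^\beta]\, \rho_\beta^{\,k}.
\end{equation*}
Hence $E[|V_k|^\beta] \le E[|Q|^\beta]\, \rho_\beta^{\,k}$.

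For $0<\beta\le 1$, subadditivity $(a+b)^\beta \le a^\beta + b^\beta$ applied to the bound $|R|\le \sum_k |V_k|$ gives $E[|R|^\beta] \le \sum_{k\ge 0} E[|V_k|^\beta] \le E[|Q|^\beta]/(1-\rho_\beta)<\infty$, in particular $|R|<\infty$ a.s. For $\beta\ge 1$, I would instead apply Minkowski's inequality in $L^\beta$, getting $\|R^{(n)}\|_\beta \le \sum_{k=0}^n \|V_k\|_\beta \le E[|Q|^\beta]^{1/\beta}\sum_{k\ge 0}\rho_\beta^{\,k/\beta}$, which is finite since $\rho_\beta<1$. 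Thus $W \triangleq \sum_{k\ge 0} |V_k| \in L^\beta$ dominates every $|R^{(n)}|$ and $|R|$, so from $R^{(n)}\to R$ a.s.\ (monotonicity of $R^{(n)}$) and $|R^{(n)}-R|^\beta \le (2W)^\beta$ the dominated convergence theorem gives $R^{(n)}\to R$ in $L^\beta$. Finally, the extension from $\beta$ to any $0<\gamma\le \beta$ follows from Lyapunov's (or Jensen's) inequality $E[|R|^\gamma]\le (E[|R|^\beta])^{\gamma/\beta}$. There is no serious obstacle here; the only genuine care-point is the pathwise bound in the first step when $Q$ may take negative values.
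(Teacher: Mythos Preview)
Your proof is correct and follows essentially the same strategy as the paper's: dominate $|R|^\beta$ by the tree sum $\sum_{k\ge 0}\sum_{\mathbf{i}\in A_k}|Q_{\mathbf{i}}|^\beta\Pi_{\mathbf{i}}^\beta$ and invoke the many-to-one identity to get the geometric series $E[|Q|^\beta]\sum_k\rho_\beta^k$. The one technical difference is that the paper obtains the pointwise bound $|R|^\beta\le\sum_{k,\mathbf{i}}|Q_{\mathbf{i}}|^\beta\Pi_{\mathbf{i}}^\beta$ directly, via the decomposition $|R|^\beta=(R^+)^\beta+(R^-)^\beta$ and the identity $(\bigwedge_k x_k)^+=\bigwedge_k x_k^+$, which works uniformly in $\beta>0$; your route through $|R|\le\sum_k|V_k|$ forces the split between $\beta\le 1$ (subadditivity) and $\beta\ge 1$ (Minkowski). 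Both are fine; the paper's is slightly cleaner, yours is slightly more modular. For the $L^\beta$ convergence, the paper uses $|R^{(n)}-R|^\beta\le\bigl|\bigvee_{k>n}V_k\bigr|^\beta$ and the same estimate on the tail, whereas you appeal to dominated convergence with the envelope $2W$; again, both arguments are valid and close in spirit.
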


\begin{proof}
Note that
\begin{align*}
|R|^\beta &= (R^+)^\beta + (R^-)^\beta = \bigvee_{k=0}^\infty (V_k^+)^\beta + \left( \left( \bigwedge_{k=0}^\infty (-V_k) \right)^+ \right)^\beta \\
&= \bigvee_{k=0}^\infty \bigvee_{{\bf i} \in A_k} (Q_{\bf i}^+)^\beta \Pi_{\bf i}^\beta +  \left( \left( \bigwedge_{k=0}^\infty \bigwedge_{{\bf i} \in A_k} (- Q_{\bf i}) \Pi_{\bf i}  \right)^+ \right)^\beta \\
&= \bigvee_{k=0}^\infty \bigvee_{{\bf i} \in A_k} (Q_{\bf i}^+)^\beta \Pi_{\bf i}^\beta +  \bigwedge_{k=0}^\infty \bigwedge_{{\bf i} \in A_k} (Q_{\bf i}^-)^\beta \Pi_{\bf i}^\beta \\
&\leq \sum_{k=0}^\infty \sum_{{\bf i} \in A_k} |Q_{\bf i}|^\beta \Pi_{\bf i}^\beta.
\end{align*}
It follows that
\begin{align*}
E\left[ |R|^\beta \right] &\leq E\left[ \sum_{k=0}^\infty \sum_{{\bf i} \in A_k} |Q_{\bf i}|^\beta \Pi_{\bf i}^\beta \right] = \sum_{k=0}^\infty E\left[ \sum_{{\bf i} \in A_k} |Q_{\bf i}|^\beta \Pi_{\bf i}^\beta \right] \\
&= \sum_{k=0}^\infty E[|Q|^\beta] \rho_\beta^k = \frac{E[|Q|^\beta]}{1-\rho_\beta} < \infty,
\end{align*}
That $R^{(n)} \stackrel{L_\beta}{\to} R$ whenever $\beta\geq 1$ follows from noting that $|R^{(n)} - R|^\beta \leq \left| \bigvee_{k = n+1}^\infty V_k \right|^\beta$ and the same arguments used above.
\end{proof}

Although this paper focuses only on the solution $R$ defined by \eqref{eq:maxR}, it is important to mention that equation \eqref{eq:Maximum} can have multiple solutions, as the work in \cite{Biggins_98} describes. The solution $R$ receives the name ``endogenous" since it is constructed explicitly from the weighted branching tree, and the name ``minimal" since it is the stochastically smallest solution, in the sense that any other solution $R'$ to \eqref{eq:Maximum} satisfies $P(R' > t) \leq P(R > t)$ for all $t > 0$. For the case when $Q \geq 0$ and there exists a unique $\upsilon > 0$ such that $E\left[ \sum_{i=1}^N C_i^\upsilon \right] = 1$ and $- \infty < E\left[ \sum_{i=1}^N C_i^\upsilon \log C_i \right] < 0$ (referred to as the ``regular case"), Theorem 1 (ii) and (iii) of \cite{Biggins_98} characterizes the entire family of solutions to \eqref{eq:Maximum}. Moreover, under some additional technical conditions, all other solutions to \eqref{eq:Maximum} are given in terms of $R$ ($M = \log R$ in \cite{Biggins_98}) and the limit $W(\upsilon)$ of the martingale $W_k(\upsilon) = \sum_{{\bf i} \in A_k} \Pi_{\bf i}^\upsilon$.  To better understand the nature of these other solutions, as well as to highlight the importance of the endogenous/minimal solution $R$, we will next define the process $\{R_n^*\}$ that is obtained from iterating equation \eqref{eq:Maximum} starting from an initial value $R_0^*$. 

Let
\begin{equation*}
R_n^* \triangleq R^{(n-1)} \vee V_n(R_0^*), \qquad n \geq 1,
\end{equation*}
where
\begin{equation} \label{eq:MaxLastWeights}
V_n(R_0^*) = \bigvee_{{\bf i} \in A_n} R^*_{0,{\bf i}} \Pi_{{\bf i}},
\end{equation}
and $\{ R_{0,{\bf i}}^*\}_{{\bf i} \in U}$ are i.i.d. copies of an initial value $R_0^*$, independent of the entire weighted tree $\mathcal{T}_{Q,C}$. $R_0^*$ is referred to as the ``terminal" value in \cite{Biggins_98} ($T = \log R_0^*$, $R_0^* \geq 0$) since it corresponds to the value of the leaves in the weighted branching tree with finitely many generations. It follows from \eqref{eq:MaxRecSamplePath} and \eqref{eq:MaxLastWeights} that
\begin{equation*} 
R_{n+1}^* =  \bigvee_{j=1}^{N} C_j \left( R_{j}^{(n-1)}  \vee \bigvee_{{\bf i} \in A_{n,j}} R_{0,{\bf i}}^* \prod_{k=2}^n C_{(j,\dots,i_k)} \right) \vee Q = \bigvee_{j=1}^N C_j R_{n,j}^* \vee Q,
\end{equation*}
where $\{ R_{j}^{(n-1)} \}$ are independent copies of $R^{(n-1)}$ corresponding to the tree starting with individual $j$ in the first generation and ending on the $n$th generation, and $A_{n,j}$ is the set of all nodes in the $(n+1)$th generation that are descendants of individual $j$ in the first generation. Moreover, $\{R_{n,j}^*\}$ are i.i.d. copies of $R_n^*$, and thus, $R_n^*$ is equal in distribution to the process obtained by iterating \eqref{eq:Maximum} with an initial condition $R_0^*$. This process can be shown to converge in distribution to $R$ for any initial condition $R_0^*$ satisfying the following moment condition (see also Theorem 9 in \cite{Biggins_98}). 

\begin{lemma} \label{L.ConvergenceMax}
Suppose $E[|Q|^\beta], E[|R_0^*|^\beta] < \infty$ and $\rho_\beta < 1$ for some $\beta >0$, then
$$R_n^* \Rightarrow R,$$
with $E[|R|^\beta] < \infty$. Furthermore, under these assumptions, the distribution of $R$ is the unique solution with finite $\beta$-moment to recursion \eqref{eq:Maximum}. 
\end{lemma}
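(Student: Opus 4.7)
The plan is to couple the iteration $\{R_n^*\}$ with the sample-path construction of $R$ on a single probability space and establish almost-sure convergence, from which the weak limit follows immediately. Concretely, augment the weighted branching tree $\mathcal{T}_{Q,C}$ by attaching to each node ${\bf i}\in U$ an independent copy $R_{0,{\bf i}}^*$ of $R_0^*$, all independent of $\mathcal{T}_{Q,C}$. Under this joint construction the identity $R_n^* = R^{(n-1)} \vee V_n(R_0^*)$ holds simultaneously for every $n$ on a common probability space.

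Lemma \ref{L.Moments_R_Max} already supplies $R^{(n-1)} \uparrow R$ a.s.\ together with $E[|R|^\beta] < \infty$, so the task reduces to controlling the leaf contribution $V_n(R_0^*)$ and handling the resulting max. Using the elementary bound $|\max_i a_i|^\beta \leq \sum_i |a_i|^\beta$ for real $a_i$, and independence of $\{R_{0,{\bf i}}^*\}$ from $\mathcal{T}_{Q,C}$,
$$E[|V_n(R_0^*)|^\beta] \leq E\!\left[\sum_{{\bf i} \in A_n} |R_{0,{\bf i}}^*|^\beta \Pi_{\bf i}^\beta\right] = E[|R_0^*|^\beta]\,\rho_\beta^n.$$
Summing over $n$ and invoking Tonelli gives $\sum_n |V_n(R_0^*)|^\beta < \infty$ a.s., hence $V_n(R_0^*) \to 0$ a.s. The same estimate with $Q$ in place of $R_0^*$ yields $V_k \to 0$ a.s., so $R = \sup_k V_k \geq \limsup_k V_k = 0$ a.s. Putting everything together,
$$R_n^* = R^{(n-1)} \vee V_n(R_0^*) \longrightarrow R \vee 0 = R \quad \text{a.s.},$$
which in particular gives $R_n^* \Rightarrow R$.

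For the uniqueness claim, let $R'$ be any solution of \eqref{eq:Maximum} with $E[|R'|^\beta] < \infty$. Taking $R_0^* \stackrel{\mathcal{D}}{=} R'$ as the initial condition, the fixed-point property forces $R_n^* \stackrel{\mathcal{D}}{=} R'$ for every $n$, and combining this with the just-established $R_n^* \Rightarrow R$ yields $R' \stackrel{\mathcal{D}}{=} R$.

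The point I expect to require the most care is the ``collapse'' of the maximum in the limit: the straightforward a.s.\ limit of $R^{(n-1)} \vee V_n(R_0^*)$ is $R \vee 0$, which a priori need not agree with $R$ in distribution. The resolution is the observation that the same $L^\beta$-summability argument that kills $V_n(R_0^*)$ also controls $V_k$ and thereby forces $R \geq 0$ a.s., so that $R \vee 0 = R$; without this additional step one would only obtain the weaker conclusion $R_n^* \Rightarrow R \vee 0$.
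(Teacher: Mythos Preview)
Your argument follows the same route as the paper's proof: show the leaf term $V_n(R_0^*)$ vanishes and combine with $R^{(n-1)}\to R$. The only cosmetic difference is that the paper uses a single Markov inequality plus Slutsky's theorem, while you sum the $\beta$-moments to get almost-sure convergence.

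There is, however, a genuine gap in your justification of $R\vee 0=R$. The claim ``$V_k\to 0$ a.s., hence $R\geq 0$ a.s.'' fails whenever the underlying branching process can die out (the paper allows $N=0$) and $Q$ can be negative. On the extinction event, $A_k=\emptyset$ from some generation on, so $V_k$ is a maximum over an empty set; under the natural convention $\max\emptyset=-\infty$ your inequality $|V_k|^\beta\leq\sum_{{\bf i}\in A_k}|Q_{\bf i}|^\beta\Pi_{\bf i}^\beta$ breaks down, $V_k\not\to 0$, and $R=\max_{k<\tau}V_k$ can be strictly negative (e.g.\ take $N\equiv 0$, $P(Q<0)>0$). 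The fix is straightforward: on the survival event your argument goes through verbatim and gives $R\geq 0$, hence $R_n^*\to R\vee 0=R$ a.s.; on the extinction event both $R^{(n-1)}$ and $R_n^*$ equal $R$ for all $n$ beyond the extinction time, so $R_n^*\to R$ trivially there. With this split your proof is complete.

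It is worth noting that the paper's invocation of Slutsky glosses over exactly the same point: Slutsky applied to $(x,y)\mapsto x\vee y$ with $V_n(R_0^*)\to 0$ would only yield $R_n^*\Rightarrow R\vee 0$, not $R$. So your instinct to flag the collapse of the maximum was correct; you just need to resolve it by treating extinction separately rather than asserting $R\geq 0$ globally.
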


\begin{proof}
The result will follow from Slutsky's Theorem (see Theorem 25.4, p. 332 in \cite{Billingsley_1995}) once we show that $V_n(R_0^*) \Rightarrow 0$. To this end, recall that $V_n(R_0^*)$ is the same as $V_n$ if we substitute the $Q_{{\bf i}}$ by the $R_{0,{\bf i}}^*$. Then, for every $\epsilon > 0$ we have that
\begin{align*}
P( |V_n(R_0^*)| > \epsilon) &\leq \epsilon^{-\beta} E[ |V_n(R_0^*)|^\beta] \leq \epsilon^{-\beta} E\left[ \sum_{{\bf i} \in A_n} |R_{0,{\bf i}}^*| \Pi_{\bf i}^\beta \right] = \epsilon^{-\beta} \rho_\beta^n E[|R_0^*|^\beta]  .
\end{align*}
Since by assumption the right-hand side converges to zero as $n \to \infty$, then $R_n^* \Rightarrow R$. Furthermore, $E[|R|^\beta] < \infty$ by Lemma \ref{L.Moments_R_Max}. Clearly, under the assumptions, the distribution of $R$ represents the unique solution to \eqref{eq:Maximum}, since any other possible solution with finite $\beta$-moment would have to converge to the same limit.  
\end{proof}

\begin{remarks}
(a) Lemma \ref{L.ConvergenceMax} establishes a certain type of uniqueness of the solution to \eqref{eq:Maximum}, in the sense that $R$ is the only possible limit for the iterative process $\{R_n^*\}$ for any initial value $R_0^*$ possessing finite $\beta$ moment. It is therefore to be expected that all other solutions to the maximum recursion must arise from violating this assumption.  (b) Theorem 1 (ii) of \cite{Biggins_98} states that in the regular case (see the comments after Lemma \ref{L.Moments_R_Max}), if $R_0^*\geq 0$ and $\lim_{t \to \infty} t^{\upsilon } P(R_0^* > t) = \gamma$ ($\upsilon < \alpha$), then $R_n^* \Rightarrow R(\gamma)$, where 
$$P( R(\gamma) \leq t) = E\left[ 1(R \leq t) e^{-\gamma W(\upsilon) t^{\upsilon}} \right].$$
Moreover, $R(\gamma)$ solves \eqref{eq:Maximum} provided $R < \infty$ a.s. and $E\left[ W_1(\upsilon) \log^+ W_1(\upsilon) \right] < \infty$. 
\end{remarks}

Now we are ready to state the main result of this section, which characterizes the asymptotic behavior of $R$. 

\begin{theo} \label{T.MaximumRecursion}
Let $(Q, N, C_1, C_2, \dots)$ be a random vector with $N \in \mathbb{N} \cup \{\infty\}$, $\{C_i\} \geq 0$ and $P(|Q| > 0) > 0$, and $R$ be the solution to \eqref{eq:Maximum} given by  \eqref{eq:maxR}. 
Suppose that there exists $j \geq 1$ with $P(N\ge j, C_j>0)>0$ such that the measure $P(\log C_j \in du, C_j > 0, N\ge j)$ is nonarithmetic, and 
that for some $\alpha > 0$,  $E[|Q|^\alpha] < \infty$, $0 < E \left[ \sum_{i=1}^N C_i^\alpha \log C_i \right] < \infty$ and  $ E \left[ \sum_{i=1}^N C_i^\alpha \right] = 1$. In addition, assume
\begin{enumerate}
\item $E\left[ \left( \sum_{i=1}^N C_i \right)^\alpha \right] < \infty$, if $\alpha > 1$; or,
\item $E\left[ \left( \sum_{i=1}^N C_i^{\alpha/(1+\epsilon)}\right)^{1+\epsilon} \right] < \infty$ for some $0 < \epsilon< 1$, if $0 < \alpha \leq 1$.
\end{enumerate}
Then,
$$P(R > t) \sim H t^{-\alpha}, \qquad P(R < -t) = o\left( t^{-\alpha} \right), \qquad t \to \infty,$$
where $0 \leq H < \infty$ is given by
\begin{align*}
H &=  \frac{1}{E\left[ \sum_{i=1}^N C_i^\alpha \log C_i \right]} \int_0^\infty v^{\alpha-1} \left( P(R > v) - E\left[ \sum_{i=1}^N 1(C_i R > v) \right] \right) dv \\
&= \frac{E\left[  (Q^+)^\alpha  \vee  \bigvee_{i=1}^N (C_i R_i^+)^\alpha   - \sum_{i=1}^N (C_i R_i^+ )^\alpha \right]}{\alpha E\left[ \sum_{i=1}^N C_i^\alpha \log C_i \right]}.
\end{align*}
Furthermore, $H > 0$ if and only if $P(Q^+ > 0) > 0$. 
\end{theo}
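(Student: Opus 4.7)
The plan splits into three parts. First, I would apply the Implicit Renewal Theorem on Trees (Theorem~3.4 of \cite{Jel_Olv_12b}) directly to the endogenous solution $R$ constructed in \eqref{eq:maxR}. The stated hypotheses---the nonarithmetic condition on $\log C_j$, the Cram\'er root $E[\sum_i C_i^\alpha]=1$ with finite positive derivative $m:=E[\sum_i C_i^\alpha \log C_i]$, the moment $E[|Q|^\alpha]<\infty$, and the extra branching--moment assumption (a) or (b)---are precisely those required by that theorem, and it delivers both $P(R>t)\sim Ht^{-\alpha}$ with $0\le H<\infty$ and the integral formula for $H$. The closed-form expression then follows by Fubini using the sample-path identity $\int_0^\infty v^{\alpha-1}1(X>v)\,dv=(X^+)^\alpha/\alpha$ together with the fixed-point relation $(R^+)^\alpha=(Q^+)^\alpha \vee \bigvee_i (C_i R_i^+)^\alpha$ obtained from \eqref{eq:Maximum} with the $\{R_i\}$ i.i.d.\ copies independent of the root vector. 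For the negative tail $P(R<-t)=o(t^{-\alpha})$, the key observation is that $R\ge Q\vee \bigvee_{i:\,C_i>0} C_i R_i$ forces $\{R<-t\}$ to impose $Q<-t$ at the root \emph{and} $R_i<-t/C_i$ for every positive-weight child simultaneously; iterating this self-similar constraint together with $\rho_\alpha=1$ and positive drift $m>0$ yields decay strictly faster than $t^{-\alpha}$.

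The easy half of the dichotomy is immediate: if $P(Q^+>0)=0$, then $Q\le 0$ a.s., and induction on $n$ using \eqref{eq:MaxRecSamplePath} yields $R^{(n)}\le 0$ a.s.\ for every $n$; passing to the monotone limit gives $R\le 0$, so $P(R>t)=0$ for all $t>0$ and $H=0$.

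The hard half, $P(Q^+>0)>0\Rightarrow H>0$, is the main obstacle and calls for a genuinely new sample-path construction. First I would reduce to the case $Q,R\ge 0$: the variable $R^+$ solves the same fixed-point equation with $Q$ replaced by $Q^+$ and $\{R_i^+\}$ i.i.d.\ copies of $R^+$, and by Lemma~\ref{L.ConvergenceMax} applied at some $\beta<\alpha$ with $\rho_\beta<1$ this uniquely determines its law. Next I would introduce Biggins' martingale $W_n(\alpha)=\sum_{|{\bf i}|=n}\Pi_{\bf i}^\alpha$, which is a mean-one non-negative martingale by the Cram\'er root condition, and the associated spine decomposition under the size-biased measure $\tilde P$ with Radon--Nikodym derivative $W_n(\alpha)$ on generation $n$. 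Under $\tilde P$ the distinguished ray $({\bf J}_n)_{n\ge 0}$ is almost surely infinite, $\log \Pi_{{\bf J}_n}$ is a random walk with positive drift $m$, and the vector attached to each spine node carries a tilted law that puts positive mass on $\{Q>0\}$---directly when $E[\sum_i C_i^\alpha 1(Q>0)]>0$, and otherwise via a single pendant step off the spine into a positive-$Q$ descendant. On this $\tilde P$-positive event one exhibits a node ${\bf i}^*$ whose path-product $Q_{{\bf i}^*}\Pi_{{\bf i}^*}$ is exponentially large, so that $R\ge Q_{{\bf i}^*}\Pi_{{\bf i}^*}$. Translating this into a quantitative lower bound on $P(R>t)$ through the many-to-one identity
$$E\Bigl[\sum_{|{\bf i}|=n}\Pi_{\bf i}^\alpha\, 1(Q_{\bf i}\Pi_{\bf i}>t)\Bigr]=\tilde P\bigl(Q_{{\bf J}_n}\Pi_{{\bf J}_n}>t\bigr),$$
combined with a Cram\'er--Lundberg--style tail estimate for the one-dimensional random walk $(\log \Pi_{{\bf J}_n})$ reflected against the i.i.d.\ arrivals $(\log Q_{{\bf J}_n})$, yields $P(R>t)\ge ct^{-\alpha}$ for all $t$ sufficiently large and hence $H\ge c>0$. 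The delicate step---the genuine novelty---is precisely this conversion from the $\tilde P$-spine event back to a quantitative $P$-tail lower bound.
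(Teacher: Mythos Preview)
Your treatment of the first three parts is essentially correct and aligns with the paper's argument, though the negative-tail step is needlessly elaborate: since $R\ge Q$ always, one has directly $P(R<-t)\le P(Q<-t)\le t^{-\alpha}E[|Q|^\alpha 1(|Q|>t)]=o(t^{-\alpha})$ by Markov's inequality and dominated convergence. No iteration is needed.

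The genuine gap is in your argument for $H>0$. The many-to-one identity you invoke,
\[
E\Bigl[\sum_{|{\bf i}|=n}\Pi_{\bf i}^\alpha\, 1(Q_{\bf i}\Pi_{\bf i}>t)\Bigr]=\tilde P\bigl(Q_{{\bf J}_n}\Pi_{{\bf J}_n}>t\bigr),
\]
computes an expected \emph{weighted count} of exceedances; it does not bound $P(R>t)$ from below. The unweighted sum $E[\sum_{|{\bf i}|=n} 1(Q_{\bf i}\Pi_{\bf i}>t)]$ is, via the union bound, an \emph{upper} bound for $P(\max_{|{\bf i}|=n} Q_{\bf i}\Pi_{\bf i}>t)$, and the $\alpha$-tilted version is not even directly comparable to a $P$-probability. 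Passing from a $\tilde P$-spine event back to $P$ requires dividing by $W_n(\alpha)$, but under the hypothesis $m>0$ this martingale is non-uniformly integrable ($W_n(\alpha)\to 0$ $P$-a.s.\ and $\to\infty$ $\tilde P$-a.s.), so the change of measure does not yield a usable lower bound. A Paley--Zygmund route would instead require second-moment control of the counting variable, which is not among the hypotheses. You flag this conversion as ``the delicate step'' but provide no mechanism; without it the spine plan does not close.

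The paper's proof of $H>0$ is entirely different and avoids the spine. It builds a \emph{minorizing} endogenous solution $\mathcal R$ on the same tree by replacing each $Q_{\bf i}$ with $\mathcal Q_{\bf i}=Q_{\bf i}\prod_{j=1}^r D_{({\bf i},j)}$, where the $D$'s are bounded i.i.d.\ variables, independent of the tree, with $E[D^\alpha]=1$ but $E[D^\beta]<1$ for a chosen $\beta\in(\alpha/2,\alpha)$. This preserves the Cram\'er root (so the first part of the theorem applies to $\mathcal R$ and yields $P(\mathcal R>v)\sim \mathcal H v^{-\alpha}$) while, by choosing $r$ large, it drives $E[(\mathcal Q^+)^\beta]$ and hence $E[(\mathcal R^+)^\beta]$ arbitrarily small. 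The numerator constant is then split as $\mathcal E=\mathcal E_1-\mathcal E_2$, where $\mathcal E_1=E[((\mathcal Q^+)^\alpha-\max_i(C_i\mathcal R_i^+)^\alpha)^+]$ is shown to exceed $E[(Q^+)^\alpha]-\delta/2$ (because $\max_i(C_i\mathcal R_i^+)^\alpha$ is small with high probability once the $\beta$-moment is small), and $\mathcal E_2=E[\sum_i(C_i\mathcal R_i^+)^\alpha-\max_i(C_i\mathcal R_i^+)^\alpha]$ is bounded using the estimate of Lemma~4.6 in \cite{Jel_Olv_12a} and made smaller than $\delta/2$ by the same smallness of $E[(\mathcal R^+)^\beta]$. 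Since $P(R>t)\ge P(\mathcal R>d^r t)$ (where $D\le d$ a.s.), positivity of $\mathcal H$ forces $H>0$. The key idea you are missing is this artificial damping of the $\beta$-moment while holding the $\alpha$-moment fixed, which turns the a~priori sign-indeterminate expression for $H$ into one that is manifestly positive.
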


\begin{remarks}
(a) The condition $E[|Q|^\alpha] < \infty$ is only needed to obtain the result about the negative tail. The result about the positive tail $P(R > t)$ only requires $E[ (Q^+)^\alpha ] < \infty$. (b) The equivalent result for the lattice case can be obtained by using the corresponding Implicit Renewal Theorem on Trees in \cite{Jel_Olv_12b}. (c) Corollary 5 in \cite{Biggins_98} provides upper bounds for the tail behavior of any finite solution to the maximum equation \eqref{eq:Maximum}. 
\end{remarks}

\begin{proof}
The first part of the proof about the right tail, $P(R > t)$, will follow from an application of the Implicit Renewal Theorem on Trees, Theorem 3.4 in \cite{Jel_Olv_12b}, once we verify the finiteness of
\begin{equation} \label{eq:integrability}
\int_0^\infty \left| P(R > t) - E\left[ \sum_{i=1}^N 1(C_i R_i > t) \right] \right| t^{\alpha-1} dt.
\end{equation}
To see that \eqref{eq:integrability} is indeed finite, note that by Lemma 4.10 in \cite{Jel_Olv_12b} we have that
$$0 \leq \int_0^\infty \left( E\left[ \sum_{i=1}^N 1(C_i R_i > t) \right]  - P\left( \bigvee_{i=1}^N C_i R_i > t \right) \right) t^{\alpha-1} dt < \infty.$$
Also, since $R^* \triangleq \left( \bigvee_{i=1}^N C_i R_i \right) \vee Q  \geq \bigvee_{i=1}^N C_i R_i$, then
\begin{align*}
0 &\leq \int_0^\infty \left( P(R^* > t) - P\left( \bigvee_{i=1}^N C_i R_i > t \right) \right) t^{\alpha-1} dt \\
&= \frac{1}{\alpha} E\left[ \left( \left( \left( \bigvee_{i=1}^N C_i R_i \right) \vee Q \right)^+ \right)^\alpha -  \left(\left( \bigvee_{i=1}^N C_i R_i \right)^+ \right)^\alpha \right] \\
&= \frac{1}{\alpha} E\left[ (Q^+)^\alpha \vee \bigvee_{i=1}^N (C_i R_i^+)^\alpha   -  \bigvee_{i=1}^N (C_i R_i^+)^\alpha  \right] \\
&\leq \frac{1}{\alpha} E[ (Q^+)^\alpha ].
\end{align*}
Combining these two observations gives that \eqref{eq:integrability} is finite, and by Theorem~3.4 (a) in \cite{Jel_Olv_12b} we obtain the result with the integral representation of $H$. To derive the second expression for $H$ follow the same steps used at the end of the proof of Theorem~4.1 in \cite{Jel_Olv_12a}.

For the negative tail, $P(R < -t)$, simply note that
\begin{align*}
P(R < -t) &= P\left( \left( \bigvee_{i=1}^N C_i R_i \right) \vee Q < -t \right) \leq P(Q < -t) \\
&\leq P( |Q| > t ) \leq E\left[ |Q|^\alpha 1(|Q|^\alpha > t) \right] t^{-\alpha},
\end{align*}
where in the last step we used Markov's inequality. Since $E[|Q|^\alpha ] < \infty$, then $E[ |Q|^\alpha 1(|Q|^\alpha > t) ] = o(1)$ as $t \to \infty$, proving the result. 

The rest of the proof is devoted to showing that the constant $H > 0$ if and only if $P(Q^+ > 0) > 0$. Note that if $Q \leq 0$ a.s. then $R^+ = 0$ and therefore $H = 0$, so it only remains to show that $H > 0$ whenever $P(Q^+ > 0) > 0$. Hence, assume from now on that $P(Q^+ > 0) > 0$. 

The main idea of the proof is to construct a minorizing random variable for $R$ for which we can directly estimate the expectation appearing in the numerator of $H$. We start by fixing $0 < \delta < E[ (Q^+)^\alpha ] \wedge 1$ and choosing $\alpha/2 < \beta < \alpha$ and $q > 0$ such that $\rho_\beta < 1$, $E\left[ \left( \sum_{i=1}^N C_i^\beta \right)^{\alpha/\beta} \right] < \infty$, and $E[ (Q^+)^\alpha 1(Q^+ > q) ] < \delta/6$; define $K = \beta^{-1} \int_0^\infty \left( e^{-u} - 1 + u \right) u^{-\alpha/\beta-1} du < \infty$. Note that such $\beta$ always exists under the assumptions of the theorem, since when $0 < \alpha \leq 1$ we have that for any $\alpha/(1+\epsilon) \leq \beta < \alpha$,
$$E\left[ \left( \sum_{i=1}^N C_i^\beta \right)^{\alpha/\beta} \right] \leq E\left[ \left( \sum_{i=1}^N C_i^{\frac{\alpha}{1+\epsilon} } \right)^{\frac{\alpha}{\beta} \cdot \frac{(1+\epsilon) \beta}{\alpha}} \right] = E\left[ \left( \sum_{i=1}^N C_i^{\frac{\alpha}{1+\epsilon} } \right)^{1+\epsilon} \right] < \infty,$$
and when $\alpha > 1$ we have that for any $1 \leq \beta < \alpha$,
$$E\left[ \left( \sum_{i=1}^N C_i^\beta \right)^{\alpha/\beta} \right] \leq E\left[ \left( \sum_{i=1}^N C_i \right)^{\frac{\alpha}{\beta} \cdot \beta} \right] = E\left[ \left( \sum_{i=1}^N C_i \right)^{\alpha} \right] < \infty.$$
Now let $\{D_{{\bf i}, j} : {\bf i} \in U, 1 \leq j \leq r \}$ be nonnegative i.i.d. random variables, independent of $\mathcal{T}$, having the same distribution as $D$, where $D$ satisfies
$$0 \leq D \leq d \, \text{ a.s.}, \qquad E[ D^\alpha ] = 1 \qquad \text{and} \qquad E[ D^\beta ] < 1,$$
(e.g., take $D$ to have density $f(x) = (\alpha/2) x^{\alpha-1} 1(0 \leq x \leq 2^{1/\alpha})$).
For each ${\bf i} \in \mathcal{T}$ define the random variable 
$$\mathcal{Q}_{\bf i} = Q_{\bf i} \prod_{j=1}^r D_{({\bf i}, j)},$$
where $r \in \mathbb{N}$ is such that 
$$\frac{q^\alpha E[(Q^+)^\beta]}{(\delta/6)(1-\rho_\beta)} (E[D^\beta])^r < \delta/6$$
and
$$K E\left[ \left( \sum_{i=1}^N C_i^\beta \right)^{\alpha/\beta} \right] \left( \frac{E[ (Q^+)^\beta]}{1-\rho_\beta} \right)^{\alpha/\beta} (E[D^\beta])^r < \delta/2.$$
Let 
$$\mathcal{R} = \bigvee_{{\bf i} \in \mathcal{T}} \mathcal{Q}_{\bf i} \Pi_{\bf i},$$
and note that for any $t > 0$,
\begin{align*}
P( R > t) &= P(R^+ > t) = P\left( \left( \bigvee_{{\bf i} \in \mathcal{T}} Q_{\bf i} \Pi_{\bf i} \right)^+ > t \right) = P\left( \bigvee_{{\bf i} \in \mathcal{T}} d^r Q_{\bf i}^+ \Pi_{\bf i} > d^r t \right) \\
&\geq P\left( \bigvee_{{\bf i} \in \mathcal{T}} \prod_{j=1}^r D_{({\bf i},j)} Q_{\bf i}^+ \Pi_{\bf i} > d^r t \right) = P\left( \bigvee_{{\bf i} \in \mathcal{T}} \mathcal{Q}_{\bf i}^+ \Pi_{\bf i} > d^r t \right) \\
&= P(\mathcal{R} > d^r t ).
\end{align*}
We now apply the first part of this theorem to the new random variable $\mathcal{R}$ to obtain 
$$P(\mathcal{R} > v ) \sim \frac{E\left[ (\mathcal{Q}^+)^\alpha \vee \bigvee_{i=1}^N (C_i \mathcal{R}_i^+ )^\alpha  - \sum_{i=1}^N (C_i \mathcal{R}_i^+ )^\alpha \right]}{\alpha E\left[ \sum_{i=1}^N C_i^\alpha \log C_i \right]} \cdot v^{-\alpha}$$
as $v \to \infty$. The positivity of $H$ will then follow once we show
$$\mathcal{E} \triangleq E\left[(\mathcal{Q}^+)^\alpha \vee \bigvee_{i=1}^N (C_i \mathcal{R}_i^+ )^\alpha  - \sum_{i=1}^N (C_i \mathcal{R}_i^+ )^\alpha \right] > 0.$$
We start by writing $\mathcal{E}$ as
\begin{align*}
\mathcal{E} &= E\left[ \left(  (\mathcal{Q}^+)^\alpha  - \bigvee_{i=1}^N (C_i \mathcal{R}_i^+ )^\alpha \right)^+ \right]  - E\left[  \sum_{i=1}^N (C_i \mathcal{R}_i^+ )^\alpha - \bigvee_{i=1}^N (C_i \mathcal{R}_i^+ )^\alpha \right]  \\
&\triangleq \mathcal{E}_1 - \mathcal{E}_2.
\end{align*}

To analyze $\mathcal{E}_1$ note that
\begin{align*}
\mathcal{E}_1 &\geq E\left[ \left(  (\mathcal{Q}^+)^\alpha 1(Q^+ \leq q)  - \bigvee_{i=1}^N (C_i \mathcal{R}_i^+ )^\alpha \right)^+ 1\left( \bigvee_{i=1}^N (C_i \mathcal{R}_i^+)^\alpha \leq \delta/6 \right) \right] \\
&\geq E\left[ \left( (\mathcal{Q}^+)^\alpha 1(Q^+ \leq q) - \delta/6 \right)^+ \right] \\
&\hspace{5mm}- E\left[ \left( (\mathcal{Q}^+)^\alpha 1(Q^+ \leq q) - \delta/6 \right)^+ 1\left( \bigvee_{i=1}^N (C_i \mathcal{R}_i^+)^\alpha > \delta/6 \right) \right] \\
&\geq E\left[ ((\mathcal{Q}^+)^\alpha 1(Q^+ \leq q) \right] - \frac{\delta}{6} -  q^\alpha E\left[ \left( \prod_{j=1}^r D_j^\alpha \right) 1\left( \bigvee_{i=1}^N (C_i \mathcal{R}_i^+)^\alpha > \delta/6 \right) \right] \\
&= E\left[ (Q^+)^\alpha \right] - E\left[ (Q^+)^\alpha 1(Q^+ > q) \right] - \frac{\delta}{6} - q^\alpha P\left( \bigvee_{i=1}^N (C_i \mathcal{R}_i^+)^\alpha > \delta/6 \right),
\end{align*}
where in the last equality we used the observation that $(\mathcal{Q}^+)^\alpha = (Q^+)^\alpha \prod_{j=1}^r D_j^\alpha$, where $\prod_{j=1}^r D_j^\alpha$ is independent of $\mathcal{T}$, and $E\left[ \prod_{j=1}^r D_j^\alpha \right] = 1$. It follows that
\begin{align*}
\mathcal{E}_1 &\geq E[(Q^+)^\alpha] - \frac{\delta}{3} - q^\alpha P\left( \bigvee_{i=1}^N (C_i \mathcal{R}_i^+ )^\beta > (\delta/6)^{\beta/\alpha} \right) \\
&\geq E[(Q^+)^\alpha] - \frac{\delta}{3} - \frac{q^\alpha}{(\delta/6)^{\beta/\alpha}} E\left[ \sum_{i=1}^N (C_i \mathcal{R}_i^+)^\beta \right] \qquad \text{(by Markov's inequality)} \\
&= E[(Q^+)^\alpha] - \frac{\delta}{3} - \frac{q^\alpha \rho_\beta}{(\delta/6)^{\beta/\alpha}} E\left[ (\mathcal{R}^+)^\beta \right].
\end{align*}
By the same arguments used in the proof of Lemma \ref{L.Moments_R_Max}, 
\begin{equation} \label{eq:BoundExpR}
E[ (\mathcal{R}^+)^\beta ] \leq E\left[ \sum_{k=0}^\infty \sum_{{\bf i} \in A_k} (\mathcal{Q}_{\bf i}^+)^\beta \Pi_{\bf i}^\beta \right] = \frac{E\left[ (\mathcal{Q}^+)^\beta \right]}{1-\rho_\beta} = \frac{ \left( E[D^\beta] \right)^r E[ (Q^+)^\beta]}{1-\rho_\beta}.
\end{equation}
Our choice of $r$ now guarantees that
$$\mathcal{E}_1 \geq E[ (Q^+)^\alpha ] - \frac{\delta}{3} - \frac{q^\alpha E[(Q^+)^\beta]}{(\delta/6) (1-\rho_\beta)} (E[D^\beta])^r > E[(Q^+)^\alpha] - \frac{\delta}{2}.$$

It remains to bound $\mathcal{E}_2$. Follow the same steps as in the proof of Lemma~4.6 in \cite{Jel_Olv_12a} to obtain
\begin{align*}
\mathcal{E}_2 &= \int_0^\infty E\left[ \sum_{i=1}^N 1(C_i \mathcal{R}_i^+ > t) - 1\left( \bigvee_{i=1}^N C_i \mathcal{R}_i^+ > t \right) \right] t^{\alpha-1} dt \\
&\leq E\left[ \beta^{-1} \left( E[ (\mathcal{R}^+)^\beta] \sum_{i=1}^N C_i^\beta \right)^{\alpha/\beta} \int_0^\infty \left( e^{-u} -1 + u \right) u^{-\alpha/\beta-1} du \right] \\
&\leq K E\left[ \left( \sum_{i=1}^N C_i^\beta \right)^{\alpha/\beta} \right] \left( \frac{ \left( E[D^\beta] \right)^r E[ (Q^+)^\beta]}{1-\rho_\beta} \right)^{\alpha/\beta} \qquad \text{(by \eqref{eq:BoundExpR})}.
\end{align*}
Our choice of $r$ now gives $\mathcal{E}_2 < \delta/2$. We conclude that
$$\mathcal{E} > E[(Q^+)^\alpha] - \delta > 0.$$
\end{proof}

\section{The linear recursion} \label{S.LinearRecursion}

In this section of the paper we explain how Theorem~\ref{T.MaximumRecursion}, which establishes the power-law behavior of the endogenous solution $R$ to the maximum equation \eqref{eq:Maximum}, can be used to show that the constant $H_L$ given by Theorem~4.6 in \cite{Jel_Olv_12b} is strictly positive. 

Consider the linear equation
\begin{equation} \label{eq:Linear}
R_L \stackrel{\mathcal{D}}{=} \sum_{i=1}^N C_i R_{L,i} + Q,
\end{equation}
where $(Q, N, C_1, C_2, \dots)$ is a real-valued random vector with $N \in \mathbb{N} \cup \{\infty\}$ and $P(|Q|>0) > 0$, and $\{R_{L,i}\}_{i \in \mathbb{N}}$ is a sequence of i.i.d. random variables independent of $(Q, N, C_1, C_2, \dots)$ having the same distribution as $R_L$. The results in this section refer to the endogenous solution given by
\begin{equation} \label{eq:linearR}
R_L = \sum_{k=0}^\infty \sum_{{\bf i} \in A_k} \Pi_{\bf i} Q_{\bf i}.
\end{equation}
We refer the reader to Section 4 in \cite{Jel_Olv_12b} for detailed conditions under which $R_L$ is well defined and how it solves \eqref{eq:Linear}. We point out that all our results for the maximum recursion assume that the weights $\{C_i\}$ are nonnegative, so the connection between $R_L$ and $R$ is much more difficult to make in this case. 

As mentioned in the introduction, the idea behind our proof lies in first considering what we call a ``symmetric tree", and using a novel argument to show that the corresponding endogenous solution to the linear recursion on this symmetric tree follows a power-law asymptotic behavior with a strictly positive constant of proportionality. The second step is to construct a symmetric tree using two coupled versions of a general (non-symmetric) tree and show how the solution to the general case is lower bounded by the symmetric solution. The first of these two steps is given in the following proposition.

\begin{prop} \label{P.Symmetry}
Let $(Q,N, C_1, C_2, \dots)$ be a random vector with $N \in \mathbb{N} \cup \{\infty\}$ and $P(|Q| > 0) > 0$, and $R_L$ be the solution to \eqref{eq:Linear} given by  \eqref{eq:linearR}. Assume that for some $0 < \beta \leq 1$, $E[|Q|^\beta] < \infty$ and $E\left[ \sum_{j=1}^N |C_j|^\beta \right] < 1$. In addition, suppose that
\begin{equation} \label{eq:symmetricVector}
(Q,N, C_1, C_2, \dots) \stackrel{\mathcal{D}}{=} (-Q,N, C_1, C_2, \dots).
\end{equation}
Then,
$$P( |R_L| > t) \geq \frac{1}{2} P\left( \max_{{\bf i} \in \mathcal{T}} |\Pi_{\bf i} Q_{\bf i}| > t \right).$$
\end{prop}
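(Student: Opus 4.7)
The plan is to exploit the joint symmetry \eqref{eq:symmetricVector} and apply a L\'evy-type symmetrization argument to the tree summands $\Pi_{\bf i} Q_{\bf i}$. My first step is to decompose signs from magnitudes: since $Q \stackrel{\mathcal{D}}{=} -Q$ conditional on $(N, C_1, C_2, \dots)$, we may write $Q = \epsilon |Q|$ for a Rademacher random variable $\epsilon$ independent of $(|Q|, N, C_1, C_2, \dots)$, and then transplant this decomposition to every node of the weighted branching tree. This produces i.i.d.\ Rademacher variables $\{\epsilon_{\bf i}\}_{{\bf i} \in U}$, jointly independent of the $\sigma$-algebra $\mathcal{F}$ generated by $\{|Q_{\bf j}|, N_{\bf j}, C_{({\bf j}, k)}\}_{{\bf j}, k}$, with $Q_{\bf i} = \epsilon_{\bf i} |Q_{\bf i}|$. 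Setting $\tilde\epsilon_{\bf i} := \mathrm{sign}(\Pi_{\bf i})\, \epsilon_{\bf i}$, these new signs remain i.i.d.\ Rademacher (multiplying by an $\mathcal{F}$-measurable $\pm 1$ preserves uniformity) and independent of $\mathcal{F}$, and the endogenous solution rewrites as $R_L = \sum_{k\geq 0} \sum_{{\bf i} \in A_k} |\Pi_{\bf i} Q_{\bf i}|\, \tilde\epsilon_{\bf i}$.

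The next step is to select an $\mathcal{F}$-measurable maximiser ${\bf i}^*$ of $|\Pi_{\bf i} Q_{\bf i}|$ and then condition on everything except $\tilde\epsilon_{{\bf i}^*}$. The hypotheses $E[|Q|^\beta]<\infty$ and $\rho_\beta = E\left[\sum_j |C_j|^\beta\right] < 1$ give
$$E\left[\sum_{k=0}^\infty \sum_{{\bf i}\in A_k} |\Pi_{\bf i} Q_{\bf i}|^\beta \right] = \frac{E[|Q|^\beta]}{1-\rho_\beta} < \infty,$$
so a.s.\ only finitely many $|\Pi_{\bf i} Q_{\bf i}|$ exceed any fixed level; hence $M:=\sup_{{\bf i} \in \mathcal{T}} |\Pi_{\bf i} Q_{\bf i}|$ is attained, and we may take ${\bf i}^*$ to be the lexicographically smallest maximiser, an $\mathcal{F}$-measurable choice. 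Condition now on $\mathcal{F}$ together with $\{\tilde\epsilon_{\bf j}\}_{{\bf j}\neq {\bf i}^*}$: under this conditioning, $M$, ${\bf i}^*$, and $R' := \sum_{{\bf j}\neq {\bf i}^*} |\Pi_{\bf j} Q_{\bf j}|\, \tilde\epsilon_{\bf j}$ are all determined, while $\tilde\epsilon_{{\bf i}^*}$ is still uniform on $\{-1, +1\}$, so $R_L \in \{R' - M,\, R' + M\}$ with equal conditional probability. The triangle inequality $\max(|R'+M|, |R'-M|) \geq M$ then forces at least one of these two values to have absolute value at least $M$, giving
$$P\left(|R_L| > t \,\middle|\, \mathcal{F}, \{\tilde\epsilon_{\bf j}\}_{{\bf j}\neq {\bf i}^*}\right) \geq \frac{1}{2}\cdot 1\{M > t\},$$
and taking expectations delivers $P(|R_L| > t) \geq \frac{1}{2} P(M > t)$, as claimed.

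The delicate point is the measurability of ${\bf i}^*$: it must be built from the $\mathcal{F}$-measurable magnitudes $|\Pi_{\bf i} Q_{\bf i}|$ rather than from the signed summands, so that excluding $\tilde\epsilon_{{\bf i}^*}$ from the conditioning is unambiguous and does not secretly pin down its value. A minor secondary check is that the series defining $R'$ converges absolutely a.s., which follows from $\sum_{\bf i}|\Pi_{\bf i} Q_{\bf i}|^\beta < \infty$ a.s.: since $\beta \leq 1$, one has $|x|^\beta \geq |x|$ whenever $|x|\leq 1$, and only finitely many summands can exceed $1$, so $\sum_{\bf i} |\Pi_{\bf i} Q_{\bf i}| < \infty$ a.s.
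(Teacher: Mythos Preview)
Your proof is correct and follows the same L\'evy-type symmetrization idea as the paper: isolate one node whose magnitude exceeds $t$, flip the sign of its $Q$-value using the symmetry hypothesis, and invoke $\max(|R'+M|,|R'-M|)\geq M$. The packaging differs slightly---you introduce explicit Rademacher signs and condition on the $\sigma$-algebra of magnitudes together with all other signs, selecting the global maximiser ${\bf i}^*$, whereas the paper works directly with the distributional identity $R_L\stackrel{\mathcal{D}}{=}R_L-2\Pi_{m_k(r)}Q_{m_k(r)}$ and decomposes into events $B_{k,r}$ according to the \emph{first} node (in a fixed enumeration) to exceed level $t$---but the mechanism is the same and neither version gains anything substantive over the other.
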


\begin{remarks}
(a) We call a weighted branching tree whose root vector satisfies \eqref{eq:symmetricVector} a symmetric tree. We will show how one can easily construct such trees in the proof of Corollary \ref{C.StrictPositivity} below. (b) That the solution $R_L$ for symmetric trees follows a power-law behavior with strictly positive constant of proportionality if and only if $Q \not\equiv 0$ immediately follows from the preceding proposition and Theorem \ref{T.MaximumRecursion} applied to the weighted branching tree having root vector $(|Q|, N, |C_1|, |C_2|, \dots)$. (c) The proof of Proposition~\ref{P.Symmetry} follows the ideas of the L\'evy-type maximal inequalities from \cite{Pena_Gine_1999} (see Theorem 1.1.1) adapted to weighted branching trees.
\end{remarks}

\begin{proof}[Proof of Proposition \ref{P.Symmetry}]
We start by defining the process
$$W_0 = Q, \qquad W_k =  \sum_{{\bf i} \in A_k} \Pi_{\bf i} Q_{\bf i}, \quad k \in \mathbb{N},$$
and with some abuse of notation, the process
$$V_0 = |Q|, \qquad V_k = \bigvee_{{\bf i} \in A_k} |\Pi_{\bf i} Q_{\bf i}|, \quad k \in \mathbb{N}.$$
Next, consider the events
$$B_0 = \{ V_0 > t \}, \qquad B_k = \left\{ \max_{0 \leq i \leq k-1} V_i \leq t, \, V_k > t \right\}, \quad k \in \mathbb{N},$$
and note that they are disjoint and satisfy
$$P\left( \max_{k \geq 0} V_k > t \right) = \sum_{k=0}^\infty P(B_k) = \sum_{k=0}^\infty \left( P\left( B_k, |R_L| > t \right) + P\left( B_k, |R_L| \leq t \right) \right).$$
To analyze the second probability on the right hand side, for each $k = 1,2, 3, \dots$, let $m_k: \mathbb{N} \to \mathbb{N}^k$ be a bijective function, and use it to define the events $B_{k,1} = B_k \cap \left\{ |\Pi_{m_k(1)} Q_{m_k(1)}| > t \right\}$ and
$$B_{k,j} = B_k \cap \left\{ \max_{1 \leq r \leq j-1} |\Pi_{m_k(r)} Q_{m_k(r)}| \leq t, \, |\Pi_{m_k(j)} Q_{m_k(j)}| > t \right\}, \quad j = 2, 3, 4, \dots,$$
where the convention is to set $\Pi_{m_k(r)} \equiv 0$ if $m_k(r) \notin A_k$. Note that the $\{B_{k,j}\}$ are disjoint and $P(B_k) = \sum_{j=1}^\infty P(B_{k,j})$. The key observation is that under the symmetry assumptions of the lemma we have that for any $k \geq 0$, and $r \in \mathbb{N}$ the sequences
$$\{ (Q_{\bf i}, N_{\bf i}, C_{({\bf i},1)}, C_{({\bf i},2)}, \dots): {\bf i} \in \mathcal{T} \}$$
and 
$$\{ (Q_{\bf i}, N_{\bf i}, C_{({\bf i},1)}, C_{({\bf i},2)}, \dots):  {\bf i} \in \mathcal{T}, {\bf i} \neq m_k(r) \} \cup \{ (-Q_{m_k(r)}, N_{m_k(r)}, C_{(m_k(r),1)}, C_{(m_k(r),2)}, \dots) \}$$
have the same distribution. It follows that for any $k \geq 0$ and $r \in \mathbb{N}$,
\begin{align*}
R_L &= \sum_{j \neq k} W_j + \sum_{{\bf i} \in A_k, {\bf i} \neq m_k(r)} \Pi_{\bf i} Q_{\bf i} + \Pi_{m_k(r)} Q_{m_k(r)}  \\
&\stackrel{\mathcal{D}}{=}  \sum_{ j \neq k} W_j + \sum_{{\bf i} \in A_k, {\bf i} \neq m_k(r)} \Pi_{\bf i} Q_{\bf i} - \Pi_{m_k(r)} Q_{m_k(r)} \\
&= R_L - 2 \Pi_{m_k(r)} Q_{m_k(r)},
\end{align*}
and since the events $\{B_{k,j}\}$ are insensitive to changes in the sign of the $\{Q_{\bf i}\}$, we have that for any $k \geq 0$, 
\begin{align*}
P\left( B_k, |R_L| \leq t \right) &= \sum_{r=1}^\infty P\left( B_{k,r}, |R_L| \leq t \right) \\
&= \sum_{r=1}^\infty P\left( B_{k,r}, \, \left| 2\Pi_{m_k(r)} Q_{m_k(r)} - R_L  \right| \leq t \right) \\
&\leq \sum_{r=1}^\infty P\left( B_{k,r}, \, 2\left| \Pi_{m_k(r)} Q_{m_k(r)} \right| - \left|R_L \right| \leq t \right) \\
&= \sum_{r=1}^\infty P\left( B_{k,r}, \,   \left|R_L \right| \geq 2 \left| \Pi_{m_k(r)} Q_{m_k(r)} \right| -t   \right) \\
&\leq \sum_{r=1}^\infty P\left( B_{k,r}, \,   \left|R_L \right| > t   \right) \qquad \text{(since $\left| \Pi_{m_k(r)} Q_{m_k(r)} \right| > t$ on $B_{k,r}$)} \\
&= P\left( B_k, \left| R_L \right| > t \right).
\end{align*}
This completes the proof. 
\end{proof}

We now proceed to the second step of our proof, the one that shows how to lower bound the endogenous solution $R_L$ in the general case with the solution to the linear equation \eqref{eq:Linear} on a closely related symmetric tree. The following technical lemma will be useful to explain our construction.

\begin{lemma} \label{L.Construction}
Let $X$ be a real-valued random variable and let ${\bf Y} = (Y_1, Y_2, \dots) \in \mathbb{R}^\infty$ be a random vector on the same probability space. Define for $x \in \mathbb{R}$,
$$F_{\bf Y}(x) = E\left[ 1( X \leq x) | {\bf Y} \right].$$
Then, $F_{\bf Y}(x)$ is nondecreasing in $x$ a.s. and $F^{-1}_{\bf Y}(t) = \inf\{ x \in \mathbb{R}: F_{\bf Y}(x) \geq t \}$ exists a.s. Moreover, if $U_1, U_2$ are two independent Uniform(0,1) random variables, independent of ${\bf Y}$, then
$$X_1 = F_{\bf Y}^{-1}(U_1) \qquad \text{and} \qquad X_2 = F_{\bf Y}^{-1}(U_2)$$
are identically distributed and conditionally independent given ${\bf Y}$, and 
$$(X_i, Y_1, Y_2, \dots) \stackrel{\mathcal{D}}{=} (X, Y_1, Y_2, \dots), \qquad i = 1,2.$$ 
\end{lemma}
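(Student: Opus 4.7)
The approach is to recognize $F_{\bf Y}(\cdot)$ as (a version of) the regular conditional distribution function of $X$ given ${\bf Y}$, and then apply the classical inverse transform construction pointwise in ${\bf Y}$. Since everything lives in $\mathbb{R}$ (a standard Borel space), such a regular conditional distribution exists; the content of the lemma is to make this concrete and verify the joint law claim.

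First I would produce a ``good'' version of $F_{\bf Y}$. For each fixed $x \in \mathbb{R}$, $F_{\bf Y}(x) := E[1(X\le x)\mid {\bf Y}]$ is defined $\sigma({\bf Y})$-measurably up to a null set, and for rationals $q \le q'$ the inclusion $\{X\le q\}\subset\{X\le q'\}$ gives $F_{\bf Y}(q)\le F_{\bf Y}(q')$ a.s. Intersecting the null exceptional sets over the countably many pairs $(q,q')\in\mathbb{Q}^2$ and using monotone/dominated convergence for conditional expectations at sequences $q\downarrow x$, $q\to-\infty$, $q\to\infty$, one finds a single null set off of which $q\mapsto F_{\bf Y}(q)$ is nondecreasing with limits $0$ and $1$. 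Redefining $F_{\bf Y}(x):=\lim_{q\downarrow x,\,q\in\mathbb{Q}}F_{\bf Y}(q)$ yields a version that is nondecreasing and right-continuous in $x$ for every ${\bf Y}$ off a fixed null set. On that full-measure set the generalized inverse $F_{\bf Y}^{-1}(t)=\inf\{x:F_{\bf Y}(x)\ge t\}$ exists for $t\in(0,1)$, and the identity $\{F_{\bf Y}^{-1}(t)\le x\}=\{t\le F_{\bf Y}(x)\}$ gives joint measurability in $({\bf Y},t)$.

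Next I would establish the distributional claim by conditioning on ${\bf Y}$. Since $U_1$ is independent of ${\bf Y}$, it remains $\mathrm{Uniform}(0,1)$ conditionally on ${\bf Y}$, so the classical inverse transform gives
$$P\bigl(F_{\bf Y}^{-1}(U_1)\le x\,\big|\,{\bf Y}\bigr)=P\bigl(U_1\le F_{\bf Y}(x)\,\big|\,{\bf Y}\bigr)=F_{\bf Y}(x)=P(X\le x\mid{\bf Y})\quad\text{a.s.}$$
Integrating against $1({\bf Y}\in A)$ for any measurable $A$ yields $P(X_1\le x,\,{\bf Y}\in A)=P(X\le x,\,{\bf Y}\in A)$, which is the joint equality in distribution $(X_1,Y_1,Y_2,\dots)\stackrel{\mathcal{D}}{=}(X,Y_1,Y_2,\dots)$; the same reasoning applies to $X_2$. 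For conditional independence, since $(U_1,U_2)$ is independent of ${\bf Y}$, conditionally on ${\bf Y}$ the pair $(U_1,U_2)$ remains i.i.d. $\mathrm{Uniform}(0,1)$, and measurable functions $X_i=F_{\bf Y}^{-1}(U_i)$ of $(U_i,{\bf Y})$ are therefore conditionally independent given ${\bf Y}$.

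The only genuinely delicate step is the first one: producing a version of $F_{\bf Y}$ that is a bona fide distribution function for every ${\bf Y}$ off a fixed null set, so that $F_{\bf Y}^{-1}$ can be defined unambiguously and so that the identity $P(U_1\le F_{\bf Y}(x)\mid{\bf Y})=F_{\bf Y}(x)$ makes sense. This is routine but requires the rational-extension argument above; alternatively one may invoke the existence of a regular conditional distribution of $X$ given $\sigma({\bf Y})$ directly and take $F_{\bf Y}$ to be its distribution function. Everything afterwards is a direct application of inverse-transform sampling and standard conditional-independence manipulations.
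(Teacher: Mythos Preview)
Your proposal is correct and follows essentially the same approach as the paper: both arguments use the inverse transform applied to the conditional distribution function, verify that $P(F_{\bf Y}^{-1}(U_i)\le x\mid{\bf Y})=F_{\bf Y}(x)=P(X\le x\mid{\bf Y})$, and then integrate against events in $\sigma({\bf Y})$ to obtain the joint law. You are in fact more careful than the paper about producing a regular version of $F_{\bf Y}$ via the rational-extension argument; the paper's proof simply asserts monotonicity and the existence of the pseudo-inverse without addressing null-set issues, so your treatment of that point is an improvement rather than a deviation.
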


\begin{proof}
That $F_{\bf Y}(x)$ is nondecreasing in $x$ a.s. follows from the fact that $1(X \leq x)$ is nondecreasing. Moreover, the pseudo inverse $F^{-1}_{\bf Y}(t)$ is well defined for all $t \in \mathbb{R}$ and satisfies $F_{\bf Y}( F^{-1}_{\bf Y}(t)) = t$ for all $t$. Now consider the two random variables $X_1$ and $X_2$ from the statement of the lemma.  Then, for any $x_1, x_2 \in \mathbb{R}$,
\begin{align*}
&E\left[ \left. 1(X_1 \leq x_1, \, X_2 \leq x_2) \right| {\bf Y} \right] \\
&= E\left[ \left. 1(F_{\bf Y}^{-1}(U_1) \leq x_1) 1(F_{\bf Y}^{-1}(U_2) \leq x_2) \right| {\bf Y} \right] \\
&= E\left[ \left. 1(F_{\bf Y}^{-1}(U_1) \leq x_1) \right| {\bf Y} \right] E\left[ \left. 1(F_{\bf Y}^{-1}(U_2) \leq x_2) \right| {\bf Y} \right] \\
&\hspace{40mm}  \text{(since $U_1$ and $U_2$ are independent)} \\
&= E\left[ \left. 1(X_1 \leq x_1) \right| {\bf Y} \right] E\left[ \left. 1(X_2 \leq x_2) \right| {\bf Y} \right],
\end{align*}
which shows the conditional independence given ${\bf Y}$. Furthermore, for any $x \in \mathbb{R}$,
\begin{align*}
E\left[ \left. 1(X_i \leq x) \right| {\bf Y} \right] &= E\left[ \left. 1(F_{\bf Y}^{-1}(U_i) \leq x) \right| {\bf Y} \right] \\
&= E\left[ \left. 1(U_i \leq F_{\bf Y}(x)) \right| {\bf Y} \right] = F_{\bf Y}(x) = E\left[ \left. 1(X \leq x) \right| {\bf Y} \right]
\end{align*}
for $i = 1,2$. Hence, $X_1$ and $X_2$ have the same conditional distribution as $X|{\bf Y}$, from where it follows that for any $A \subseteq \mathbb{R}^\infty$ and $i = 1,2$,
\begin{align*}
P\left( (X_i, Y_1, Y_2, \dots) \in A \right) &= E\left[ E\left[ \left. 1( (X_i, Y_1, Y_2, \dots) \in A) \right| {\bf Y} \right] \right] \\
&= E\left[ E\left[ \left. 1( (X, Y_1, Y_2, \dots) \in A) \right| {\bf Y} \right] \right] \\
&= P\left( (X, Y_1, Y_2, \dots) \in A \right).
\end{align*}
\end{proof}

We now derive as a corollary the strict positivity of the constant in Theorem 4.6 of \cite{Jel_Olv_12b} for the general case. Note that the result holds under no additional assumptions beyond those required in that theorem, in other words, we only require $E\left[ \left( \sum_{i=1}^N |C_i| \right)^{\alpha} \right] < \infty$ when $\alpha > 1$ and $E\left[ \left( \sum_{i=1}^N |C_i|^{\alpha/(1+\epsilon)} \right)^{1+\epsilon}  \right] < \infty$ for some $\epsilon > 0$ (which does not imply $\rho_{\alpha+\epsilon} < \infty$) when $0 < \alpha \leq 1$; compare these to conditions (C) and (A) in \cite{Alsm_Dam_Ment_13}, respectively. We also point out that Theorem 4.6 of \cite{Jel_Olv_12b} does not require the existence of the first root $0 < \upsilon < \alpha$ of the equation $E\left[ \sum_{i=1}^N |C_i|^\upsilon \right] = 1$, but only the derivative condition $0 < E\left[ \sum_{i=1}^N |C_i|^\alpha \log |C_i| \right] < \infty$.   Finally, our main result on the asymptotic behavior of the minimal/endogenous solution $R$ to the maximum equation (Theorem~\ref{T.MaximumRecursion}) gives that $H_L > 0$ provided $Q$ is not a deterministic function of the weights $\{C_i\}$ and assuming all the other conditions in Theorem 4.6 of \cite{Jel_Olv_12b} are satisfied.

\begin{cor} \label{C.StrictPositivity}
Under the assumptions of Theorem 4.6 in \cite{Jel_Olv_12b} and provided $Q$ is not a deterministic function of $(N, C_1, C_2, \dots)$, we have
$$P( |R| > t ) \sim K t^{-\alpha}, \qquad t \to \infty,$$
where $0 < K < \infty$ and 
$$K = \frac{E\left[ \left| \sum_{i=1}^N C_i R_i + Q \right|^\alpha - \sum_{i=1}^N |C_i R_i|^\alpha \right]}{\alpha E\left[ \sum_{i=1}^N |C_i|^\alpha \log|C_i| \right]}.$$
\end{cor}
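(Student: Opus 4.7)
The plan is to combine the symmetrization inequality of Proposition~\ref{P.Symmetry} with a coupling that realizes the endogenous solution on a symmetric tree as the difference of two identically distributed copies of $R_L$ built on the \emph{same} weight tree. Since Theorem~4.6 in \cite{Jel_Olv_12b} already gives $P(|R_L|>t)\sim K t^{-\alpha}$ with $K\ge 0$ together with the explicit formula for $K$, it suffices to establish the one-sided bound $\liminf_{t\to\infty} t^\alpha P(|R_L|>t)>0$.

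First, I would apply Lemma~\ref{L.Construction} with $X=Q$ and ${\bf Y}=(N,C_1,C_2,\dots)$ to obtain two conditionally independent copies $Q^{(1)}, Q^{(2)}$ of $Q$ given the weights, each satisfying $(Q^{(k)},N,C_1,C_2,\dots)\stackrel{\mathcal{D}}{=}(Q,N,C_1,C_2,\dots)$. Performing this construction independently at every node ${\bf i}$ of the weighted branching tree yields coupled node values $\{Q_{\bf i}^{(1)}\}$ and $\{Q_{\bf i}^{(2)}\}$ that share the same sequence of products $\{\Pi_{\bf i}\}$, and the difference $\tilde Q_{\bf i}:=Q_{\bf i}^{(1)}-Q_{\bf i}^{(2)}$ is conditionally symmetric given the weights. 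Hence $(\tilde Q,N,C_1,C_2,\dots)\stackrel{\mathcal{D}}{=}(-\tilde Q,N,C_1,C_2,\dots)$, placing us in the setting of \eqref{eq:symmetricVector}. The standing hypothesis that $Q$ is not a deterministic function of $(N,C_1,C_2,\dots)$ is used exactly here: it forces the conditional law of $Q$ given the weights to be non-degenerate on a set of positive probability, so that $P(|\tilde Q|>0)>0$.

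Next, set $R_L^{(k)}:=\sum_{n\ge 0}\sum_{{\bf i}\in A_n}\Pi_{\bf i} Q_{\bf i}^{(k)}$ for $k=1,2$, and define $\tilde R_L:=R_L^{(1)}-R_L^{(2)}=\sum_{n,{\bf i}}\Pi_{\bf i}\tilde Q_{\bf i}$. Each $R_L^{(k)}$ is distributed as $R_L$, and $\tilde R_L$ is the endogenous solution to the linear recursion on the symmetric tree. Proposition~\ref{P.Symmetry} applied to $\tilde R_L$ then gives
$$P(|\tilde R_L|>t)\ \ge\ \tfrac{1}{2}\,P\!\left(\max_{{\bf i}\in\mathcal{T}}|\Pi_{\bf i}\tilde Q_{\bf i}|>t\right),$$
and the right-hand side is the tail of the minimal/endogenous solution to the maximum recursion driven by the nonnegative weights $|C_i|$ and root value $|\tilde Q|$. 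All hypotheses of Theorem~\ref{T.MaximumRecursion} transfer from those of Theorem~4.6 in \cite{Jel_Olv_12b}: the nonarithmetic condition is unchanged, the main root identity $E[\sum_i |C_i|^\alpha]=1$ and derivative condition $0<E[\sum_i |C_i|^\alpha\log|C_i|]<\infty$ are identical, the sum-moment assumption on $\sum_i|C_i|$ or $\sum_i|C_i|^{\alpha/(1+\epsilon)}$ is the same, and $E[|\tilde Q|^\alpha]<\infty$ follows from $|\tilde Q|\le|Q^{(1)}|+|Q^{(2)}|$ and $E[|Q|^\alpha]<\infty$. Since $P(|\tilde Q|>0)>0$, Theorem~\ref{T.MaximumRecursion} yields $P(\max_{\bf i}|\Pi_{\bf i}\tilde Q_{\bf i}|>t)\sim H\,t^{-\alpha}$ with $H>0$.

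Finally, the triangle inequality $|\tilde R_L|\le |R_L^{(1)}|+|R_L^{(2)}|$ gives $P(|\tilde R_L|>2t)\le 2\,P(|R_L|>t)$, whence $\liminf_{t\to\infty} t^\alpha P(|R_L|>t)\ge H\cdot 2^{-(\alpha+2)}>0$, which combined with Theorem~4.6 in \cite{Jel_Olv_12b} forces $K>0$ and the formula for $K$ is inherited from that theorem. The main obstacle is conceptual rather than computational: the coupling must be engineered so that the two trees share $\{\Pi_{\bf i}\}$ node by node (otherwise $\tilde R_L$ does not arise as the endogenous solution on a single symmetric tree, and Proposition~\ref{P.Symmetry} cannot be applied) while simultaneously keeping $Q_{\bf i}^{(1)}$ and $Q_{\bf i}^{(2)}$ conditionally independent given those weights (otherwise $\tilde Q_{\bf i}$ fails to be symmetric). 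Lemma~\ref{L.Construction} is precisely the tool that reconciles these two requirements.
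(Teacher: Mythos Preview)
Your proposal is correct and follows essentially the same approach as the paper. The only cosmetic difference is that the paper constructs $\hat Q$ with $(\hat Q,N,C_1,\dots)\stackrel{\mathcal{D}}{=}(-Q,N,C_1,\dots)$ and symmetrizes via the \emph{average} $\overline{Q}=(Q+\hat Q)/2$, whereas you take two conditionally i.i.d.\ copies $Q^{(1)},Q^{(2)}$ of $Q$ and symmetrize via the \emph{difference} $\tilde Q=Q^{(1)}-Q^{(2)}$; since setting $\hat Q=-Q^{(2)}$ gives $\overline{Q}=\tilde Q/2$, the two constructions are the same up to a harmless factor of $2$, and the remaining steps (Proposition~\ref{P.Symmetry}, Theorem~\ref{T.MaximumRecursion}, and the triangle-inequality comparison) coincide exactly.
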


\begin{remark}
Note that the same arguments used in the proof of Proposition~\ref{P.Symmetry} work, for any choice of $Q \not\equiv 0$, if the weights $\{C_i\}$ are symmetric, in which case the strict positivity of $H_L$ holds without any assumptions on $Q$. 
\end{remark}

\begin{proof}
Let $\{ (Q_{\bf i}, N_{\bf i}, C_{({\bf i}, 1)}, C_{({\bf i}, 2)}, \dots) \}_{{\bf i} \in U}$ be a  sequence of i.i.d. vectors and construct its corresponding random variable 
$$R = \sum_{k=0}^\infty \sum_{{\bf i} \in A_k} \Pi_{\bf i} Q_{\bf i}.$$
Now use this sequence and Lemma \ref{L.Construction} to construct a second i.i.d. sequence $\{ (\hat Q_{\bf i}, N_{\bf i}, C_{({\bf i}, 1)}, C_{({\bf i}, 2)}, \dots) \}_{{\bf i} \in U}$ where
$$(\hat Q, N, C_1, C_2, \dots)  \stackrel{\mathcal{D}}{=} (- Q, N, C_1, C_2, \dots)$$
and such that $\hat Q_{\bf i}$ and $Q_{\bf i}$ are conditionally independent given $(N_{\bf i}, C_{({\bf i},1)}, C_{({\bf i}, 2)}, \dots)$ for all ${\bf i} \in U$. Denote by $\hat R$ the corresponding process
$$\hat R = \sum_{k=0}^\infty \sum_{{\bf i} \in A_k} \Pi_{\bf i} \hat Q_{\bf i},$$
and note that $|R| \stackrel{\mathcal{D}}{=} |\hat R|$. Next define 
$$\overline{R} = \frac{R + \hat R}{2} =  \sum_{k=0}^\infty \sum_{{\bf i} \in A_k} \Pi_{\bf i} \left( \frac{Q_{\bf i} + \hat Q_{\bf i}}{2} \right) \triangleq \sum_{k=0}^\infty \sum_{{\bf i} \in A_k} \Pi_{\bf i} \overline{Q}_{\bf i},$$
and observe that $\overline{R}$ satisfies the conditions of Proposition \ref{P.Symmetry}, and therefore
$$P(|\overline{R}| > t ) \geq \frac{1}{2} P\left( \max_{{\bf i} \in \mathcal{T}} |\Pi_{\bf i} \overline{Q}_{\bf i} | > t \right).$$
Note that the assumption that $Q$ is not a deterministic function of $(N, C_1, C_2, \dots)$ implies that $\overline{Q} \not\equiv 0$. Moreover, by Theorem \ref{T.MaximumRecursion},
$$P\left( \max_{{\bf i} \in \mathcal{T}} |\Pi_{\bf i} \overline{Q}_{\bf i} | > t \right) \sim \overline{H} t^{-\alpha}$$
as $t \to \infty$ for some constant $0 < \overline{H} < \infty$. The last step is to note that
$$P( |\overline{R}| > t ) \leq P\left( |R| + |\hat R| > 2 t \right) \leq P( |R| > t ) + P(|\hat R| > t) = 2 P(|R| > t).$$

\end{proof}

\bibliographystyle{plain}

\end{document}